\newcommand{\R}{\mathds R}
\newcommand{\dd}{\mathrm d}
\newcommand{\EP}{$c_0$EP}
\newcommand{\DA}{\mathrm{DA}}
\newcommand{\upstar}[1]{{#1}^{\raise1pt\hbox{$\scriptscriptstyle*$}}}
\newcommand{\chilow}[1]{\chi_{\lower2pt\hbox{$\scriptstyle#1$}}}
\newcommand{\notarrow}{\mathrel{\hbox{$\hspace{3pt}\not{}\hspace{-10pt}\longrightarrow$}}}
\DeclareMathOperator{\Ker}{Ker}
\DeclareMathOperator{\BV}{BV}
\DeclareMathOperator{\NBV}{NBV}
\DeclareMathOperator{\supp}{supp}
\title[On the $c_0$-extension property for compact lines]{On the $\mathbf{c_0}$-extension property for\\
compact lines}
\author{Claudia Correa}
\thanks{The first author is sponsored by FAPESP (Process no.\ 2012/25171-0).}
\address{Departamento de Matem\'atica,\hfill\break\indent Universidade de S\~ao Paulo, Brazil}
\email{claudiac.mat@gmail.com}
\author{Daniel V. Tausk}
\address{Departamento de Matem\'atica,\hfill\break\indent Universidade de S\~ao Paulo, Brazil}
\email{tausk@ime.usp.br} \urladdr{http://www.ime.usp.br/\~{}tausk}
\subjclass[2010]{46B20,46E15,54F05}
\keywords{Banach spaces of continuous functions; extensions of bounded operators; compact lines}
\date{March 3rd, 2014}
\begin{document}

\theoremstyle{plain}\newtheorem{teo}{Theorem}[section]
\theoremstyle{plain}\newtheorem{prop}[teo]{Proposition}
\theoremstyle{plain}\newtheorem{lem}[teo]{Lemma}
\theoremstyle{plain}\newtheorem{cor}[teo]{Corollary}
\theoremstyle{definition}\newtheorem{defin}[teo]{Definition}
\theoremstyle{remark}\newtheorem{rem}[teo]{Remark}
\theoremstyle{plain} \newtheorem{assum}[teo]{Assumption}
\theoremstyle{definition}\newtheorem{example}[teo]{Example}

\begin{abstract}
We present a characterization of the continuous increasing surjections $\phi:K\to L$ between compact lines $K$ and $L$
for which the corresponding subalgebra $\phi^*C(L)$ has the $c_0$-extension property in $C(K)$.
A natural question arising in connection with this characterization is shown to be independent
of the axioms of ZFC.
\end{abstract}

\maketitle

\begin{section}{Introduction}

In this paper we continue the study of the $c_0$-extension property in the context of spaces
of the form $C(K)$ which was initiated in \cite{CTJFA, CT}. Here, as usual, $C(K)$ denotes the space of continuous real-valued functions on a compact Hausdorff space $K$, endowed with the supremum norm. A closed subspace $Y$ of a Banach space $X$ is said to have
the {\em $c_0$-extension property\/} (briefly: \EP) in $X$ if every bounded $c_0$-valued operator defined in $Y$ admits a bounded extension to $X$. This definition was introduced by the authors in \cite{CT} with the purpose of
studying extensions of the celebrated Theorem of Sobczyk \cite{Sobczyk}, which states that every closed subspace
of a separable Banach space $X$ has the \EP\ in $X$. The quest for generalizations of Sobczyk's Theorem has been
engaged upon by many authors \cite{ArgyrosLondon, CTJFA, CT, Molto, Patterson, c0sum, JesusSobczyk}. For instance, we
have used in \cite{CTJFA} the notion of \EP\ to prove that every isomorphic copy of $c_0$ in $C(K)$ is complemented,
when $K$ is a compact line. By a {\em compact line\/} we mean a linearly ordered set which is compact in the order topology. Topological properties of compact lines and structural
properties of their spaces of continuous functions have recently been studied in a series of articles \cite{CTJFA, CT, KK, KK2, Kubis}.

The main result of this paper (Theorem~\ref{thm:main}) is a characterization of the continuous increasing
surjections $\phi:K\to L$ between compact lines $K$ and $L$ such that the range $\phi^*C(L)$ of the composition
operator
\[\phi^*:C(L)\ni f\longmapsto f\circ\phi\in C(K)\]
has the \EP\ in $C(K)$. When the latter condition holds, we say that the map $\phi$ has the \EP. The characterization given in Theorem~\ref{thm:main} involves the order structure of $L$ and the set $Q(\phi)$ defined as:
\[Q(\phi)=\big\{t\in L:\vert\phi^{-1}(t)\vert>1\big\},\]
where $\vert\cdot\vert$ denotes the cardinality of a set. We note that in \cite[Lemma~2.7]{KK} it is given a necessary and sufficient condition for the complementation of $\phi^*C(L)$ in $C(K)$ (equivalently,
for the existence of an averaging operator for $\phi$), again in terms of the order structure of $L$ and the set $Q(\phi)$.

This paper is organized as follows. In Section~\ref{sec:c0EP}, we prove a criterion (Proposition~\ref{thm:extcriteria}) for the extensibility to $C(K)$ of $c_0$-valued bounded operators defined in $\phi^*C(L)\equiv C(L)$. This criterion is
a generalization of \cite[Lemma~3.2]{CTJFA} and it is used in the proof of Theorem~\ref{thm:main}. In Section~\ref{sec:separable}, we present a nicer formulation (Theorem~\ref{thm:mainseparavel}) of the main result of the paper in the case when $L$ is separable. A naturally arising question is then considered and shown to be independent of the axioms of ZFC.

\end{section}

\begin{section}{Characterization of increasing maps with the \EP}
\label{sec:c0EP}

We start by fixing the terminology and notation for the paper and by recalling some elementary facts.
Given a compact Hausdorff space $K$, we identify as usual the dual space of $C(K)$ with the space $M(K)$ of finite countably-additive signed regular Borel measures on $K$, endowed with the total variation norm $\Vert\mu\Vert=\vert\mu\vert(K)$. Given a point $p\in K$,
we denote by $\delta_p\in M(K)$ the probability measure with support $\{p\}$. If $\phi:K\to L$ is a continuous
map between compact Hausdorff spaces $K$ and $L$, then the adjoint of the composition operator $\phi^*$
is denoted by $\phi_*:M(K)\to M(L)$ and it is given by:
\[\phi_*(\mu)(B)=\mu\big(\phi^{-1}[B]\big),\]
for every $\mu\in M(K)$ and every Borel subset $B$ of $L$.

Bounded operators $T:C(K)\to\ell_\infty$ are always identified with bounded sequences of measures $(\mu_n)_{n\ge1}$ in $M(K)$, where $\mu_n$
represents the $n$-th coordinate functional of $T$. In this case, we will say that $T$ is associated with $(\mu_n)_{n\ge1}$.
Note that $T$ takes values in $c_0$ if and only if $(\mu_n)_{n\ge1}$ is weak*-null.

Let $X$ be a linearly ordered set and $A$ be a subset of $X$. A point $x\in X$ is said to be a {\em right limit point\/} (resp., {\em right condensation point}) of $A$ (relatively to $X$) if $x$ is not the maximum
of $X$ and for every $y\in X$ with $y>x$ we have that $\left]x,y\right[\cap A$ is nonempty (resp., is uncountable).
A point of $A$ that is not a right limit point of $A$ is said to be {\em right isolated\/} in $A$. Similarly, one defines left limit, left condensation and left isolated points. If $x\in X$ is a two-sided limit point
of $X$ (i.e., $x$ is both a left limit point and a right limit point of $X$), then we call $x$ an {\em internal\/} point of $X$. The points of $X$ that are not internal are called {\em external}.
Given a compact line $K$, note that if $H$ is a closed subset of $K$ and $A$ is a subset of $H$, then a point $t\in H$ is a right limit point (resp., left limit point) of $A$ relatively to $H$ if and only if $t$ is a right limit point (resp., left limit point) of $A$ relatively to $K$.

When $K$ is a compact line, it is possible to give a more concrete description of the space $M(K)$
in terms of functions of bounded variation (Lemma~\ref{thm:MKNBVK} below). Given a map $F:K\to\R$, the total variation $V(F)\in[0,+\infty]$ is defined exactly as in the case $K=[0,1]$. We denote by $\BV(K)$ the Banach space of functions $F:K\to\R$ of bounded variation
(i.e., functions $F$ with $V(F)<+\infty$) endowed with the norm:
\[\Vert F\Vert_{\BV}=\vert F(0)\vert+V(F),\]
where $0$ always denote the minimum element of a compact line.
Then:
\[\NBV(K)=\big\{F\in\BV(K):\text{$F$ is right-continuous}\big\}\]
is a closed subspace of $\BV(K)$.
\begin{lem}\label{thm:MKNBVK}
Given a compact line $K$, the map:
\begin{equation}\label{eq:muFmu}
M(K)\ni\mu\longmapsto F_\mu\in\NBV(K)
\end{equation}
is a linear isometry, where $F_\mu$ is defined by $F_\mu(t)=\mu\big([0,t]\big)$, for all $t\in K$.
\end{lem}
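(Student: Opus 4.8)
The plan is to check four things in turn: (i) that $F_\mu$ really lies in $\NBV(K)$ and $\Vert F_\mu\Vert_{\BV}\le\Vert\mu\Vert$; (ii) linearity of $\mu\mapsto F_\mu$; (iii) that $\Vert F_\mu\Vert_{\BV}\ge\Vert\mu\Vert$, so that the map is isometric and in particular injective; and (iv) surjectivity onto $\NBV(K)$. For (i), each initial segment $[0,t]$ is closed, hence Borel, so $F_\mu$ is a well-defined real-valued function, and the only point requiring care is right-continuity. If $t$ is the maximum of $K$ or has an immediate successor $t^{+}$, then $[0,t]$ is either all of $K$ or the open set $\{x\in K:x<t^{+}\}$, so any net in $K$ converging to $t$ while remaining in $[t,\max K]$ is eventually equal to $t$, and there is nothing to prove. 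If $t$ is a right limit point of $K$, I would fix $\varepsilon>0$, use outer regularity of the finite positive measure $\vert\mu\vert$ to produce an open $U\ni t$ with $\vert\mu\vert(U\setminus\{t\})<\varepsilon$, choose $b>t$ with $\left]t,b\right[\subseteq U\setminus\{t\}$, and pick $s_0\in\left]t,b\right[$ (nonempty since $t$ is a right limit point); then for every $s\in\left]t,s_0\right]$ one has $\vert F_\mu(s)-F_\mu(t)\vert=\vert\mu(\left]t,s\right])\vert\le\vert\mu\vert(\left]t,s_0\right])\le\vert\mu\vert(U\setminus\{t\})<\varepsilon$. Bounded variation is then immediate, since for any finite chain $0=t_0<\dots<t_n$ in $K$ the intervals $\left]t_{i-1},t_i\right]$ are pairwise disjoint and so $\sum_{i=1}^{n}\vert F_\mu(t_i)-F_\mu(t_{i-1})\vert=\sum_{i=1}^{n}\vert\mu(\left]t_{i-1},t_i\right])\vert\le\vert\mu\vert(K\setminus\{0\})$; hence $V(F_\mu)\le\vert\mu\vert(K\setminus\{0\})$, and as $\vert F_\mu(0)\vert=\vert\mu(\{0\})\vert=\vert\mu\vert(\{0\})$ we obtain $\Vert F_\mu\Vert_{\BV}\le\vert\mu\vert(K)=\Vert\mu\Vert$. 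Step (ii) is clear, since $\mu\mapsto\mu([0,t])$ is linear for each fixed $t$.

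For (iii) it is enough to show $V(F_\mu)\ge\vert\mu\vert(K\setminus\{0\})$, which together with $\vert F_\mu(0)\vert=\vert\mu\vert(\{0\})$ gives $\Vert F_\mu\Vert_{\BV}\ge\Vert\mu\Vert$. Fix $\varepsilon>0$ and a Hahn decomposition $K=P\cup N$ for $\mu$. Using inner regularity I would choose disjoint compact sets $C_P\subseteq P\setminus\{0\}$ and $C_N\subseteq N\setminus\{0\}$ carrying all but $\varepsilon$ of the mass of $\mu^{+}$ and of $\mu^{-}$ outside $\{0\}$, separate $C_P$, $C_N$ and $\{0\}$ by pairwise disjoint open sets, and then replace the open sets around $C_P$ and $C_N$ by disjoint finite unions $W_P$ and $W_N$ of intervals of the form $\left]a,b\right]$ (including $[0,b]$ and $\left]a,\max K\right]$), losing only $O(\varepsilon)$ of $\vert\mu\vert$; this last step uses a finite subcover by basic open intervals together with the fact that a basic open interval can be shrunk slightly from the right to such an interval at the cost of an arbitrarily small amount of $\vert\mu\vert$. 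One then refines the finitely many endpoints of $W_P$ and $W_N$, together with $0$ and $\max K$, to a chain $0=t_0<\dots<t_n=\max K$ along which every $\left]t_{i-1},t_i\right]$ lies in $W_P$, in $W_N$, or outside $W_P\cup W_N$, and the sum $\sum_{i}\vert\mu(\left]t_{i-1},t_i\right])\vert$ recovers $\vert\mu\vert(K\setminus\{0\})$ up to $O(\varepsilon)$. This is the classical argument for approximating a signed measure by its values on intervals, transported to a compact linearly ordered space, and it is where the bulk of the measure-theoretic work sits; I expect it to be the main obstacle, precisely because a finite regular Borel measure on a (possibly non-metrizable) compact linearly ordered space need not be well approximated by its restriction to a fixed countable algebra, so one cannot finish with a bare monotone-class argument and must use regularity and the interval structure directly.

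Finally, for surjectivity (iv), given $F\in\NBV(K)$ I would take a Jordan decomposition $F=F_1-F_2$ with $F_1,F_2$ bounded, increasing, right-continuous and nonnegative (adding a constant to both if need be, which is harmless since injectivity is already known and norms need not be tracked here), and realize each $F_i$ as a finite positive \emph{regular} Borel measure $\nu_i$ on $K$ by a Stieltjes-type construction---for instance, as the push-forward of Lebesgue measure on $[0,F_i(\max K)]$ under the generalized inverse $s\mapsto\min\{t\in K:F_i(t)\ge s\}$, whose well-definedness is exactly what right-continuity of $F_i$ provides. One then checks that $\nu_i([0,t])=F_i(t)$ for all $t$, so that $\mu:=\nu_1-\nu_2$ satisfies $F_\mu(t)=\nu_1([0,t])-\nu_2([0,t])=F_1(t)-F_2(t)=F(t)$; the only delicate point here is to confirm that this construction genuinely produces regular Borel measures, which on a non-metrizable $K$ is not quite automatic. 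Together with (i)--(iii), the map $\mu\mapsto F_\mu$ is then a surjective linear isometry.
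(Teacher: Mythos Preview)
The paper does not prove this lemma at all: its entire proof reads ``See \cite[Lemma~3.1]{CTJFA}.'' So there is no argument here to compare against, and your proposal supplies a self-contained proof where the paper simply imports the result from a companion article.

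Your outline is sound and follows the natural route. A few remarks. First, note that the statement as written only claims a \emph{linear isometry}, not a surjective one; your step~(iv) is therefore going beyond what is literally asserted, although the paper does use the inverse correspondence later (e.g., ``let $\mu_n$ correspond to $F_n$ through \eqref{eq:muFmu}''), so proving surjectivity is worthwhile. Second, in step~(iii) the reduction to finite unions of half-open intervals is the right idea but, as you yourself flag, it is the place where the details need care on a non-metrizable line; an alternative that avoids this is to show directly that the positive linear functional $f\mapsto\int f\,\dd\vert\mu\vert$ can be approximated by sums $\sum_i\varepsilon_i\big(F_\mu(t_i)-F_\mu(t_{i-1})\big)$ via the density of step functions based on half-open intervals in $L^1(\vert\mu\vert)$, which follows from regularity without the delicate covering argument. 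Third, in step~(iv) your push-forward construction is elegant, but the regularity concern you raise is genuine; one clean fix is to observe that the generalized inverse $g$ is increasing, hence (by the argument of Lemma~\ref{thm:increasing}) measurable into the Baire $\sigma$-algebra of $K$, so the push-forward is a finite Baire measure, and every finite Baire measure on a compact Hausdorff space extends uniquely to a regular Borel measure. Alternatively, define the Riemann--Stieltjes integral $f\mapsto\int f\,\dd F$ on $C(K)$ and invoke the Riesz representation theorem directly.
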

\begin{proof}
See \cite[Lemma~3.1]{CTJFA}.
\end{proof}

Throughout the remainder of the paper, we always denote by $\phi:K\to L$ a continuous increasing surjection between compact lines $K$ and $L$. We set:
\[Q_0(\phi)=\big\{t\in Q(\phi):\text{$t$ is an internal point of $L$}\big\}.\]

Our first goal is to prove the extension criterion for $c_0$-valued bounded operators defined in $\phi^*C(L)\equiv C(L)$ (Proposition~\ref{thm:extcriteria}). The proof requires two lemmas.

\medskip

By the {\em canonical retraction\/} of $K$ onto a given closed interval $[a,b]$ of $K$,
we mean the retraction $R:K\to[a,b]$ that maps $[0,a]$ to $a$ and $[b,\max K]$ to $b$. Then $R^*C\big([a,b]\big)$
consists of the elements of $C(K)$ that are constant on $[0,a]$ and on $[b,\max K]$.
\begin{lem}\label{thm:lindense}
For each $t\in L$,
denote by $R_t:K\to\phi^{-1}(t)$ the canonical retraction of $K$ onto the closed interval $\phi^{-1}(t)$. Then
the set:
\begin{equation}\label{eq:lindenso}
\phi^*C(L)\cup\!\!\!\bigcup_{t\in Q(\phi)}\!\!R_t^*C\big(\phi^{-1}(t)\big)
\end{equation}
is linearly dense in $C(K)$, i.e., it spans a dense subspace of $C(K)$.
\end{lem}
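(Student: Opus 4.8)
The plan is to argue by duality. By the Hahn--Banach theorem, the linear span of the set \eqref{eq:lindenso} is dense in $C(K)$ if and only if the only measure $\mu\in M(K)$ that annihilates every function in \eqref{eq:lindenso} is $\mu=0$. So I would fix such a $\mu$ and show that the function $F_\mu$ vanishes identically on $K$; by Lemma~\ref{thm:MKNBVK} this forces $\mu=0$.

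The first step is to rephrase the two families of annihilation conditions as conditions on pushforward measures. The hypothesis that $\mu$ annihilates $\phi^*C(L)$ says that $\int_K(f\circ\phi)\,\dd\mu=0$ for all $f\in C(L)$, i.e., $\phi_*\mu=0$; and, for a fixed $t\in Q(\phi)$, the hypothesis that $\mu$ annihilates $R_t^*C\big(\phi^{-1}(t)\big)$ says that $\int_K(h\circ R_t)\,\dd\mu=0$ for all $h\in C\big(\phi^{-1}(t)\big)$, i.e., $(R_t)_*\mu=0$ in $M\big(\phi^{-1}(t)\big)$. Now write $a_t=\min\phi^{-1}(t)$ and $b_t=\max\phi^{-1}(t)$; these exist because $\phi^{-1}(t)$ is a nonempty closed interval of $K$. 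Since $\phi$ is increasing and surjective, $\phi^{-1}\big[[0,t]\big]=[0,b_t]$, and hence $F_\mu(b_t)=\mu\big([0,b_t]\big)=\phi_*\mu\big([0,t]\big)=0$. For $t\in Q(\phi)$ and any $c\in\left[a_t,b_t\right[$, the canonical retraction $R_t$ collapses $[0,a_t]$ onto $a_t$ and fixes $[a_t,c]$ pointwise, so $R_t^{-1}\big[[a_t,c]\big]=[0,c]$, and hence $F_\mu(c)=\mu\big([0,c]\big)=(R_t)_*\mu\big([a_t,c]\big)=0$. Thus $F_\mu$ vanishes at every point of $\{b_t:t\in L\}$ and at every point of $\phi^{-1}(t)$, for every $t\in Q(\phi)$.

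The last step is the observation that these points already exhaust $K$: given $x\in K$, put $t=\phi(x)$; if $\big|\phi^{-1}(t)\big|=1$ then $\phi^{-1}(t)=\{x\}$ and $x=b_t$, whereas if $\big|\phi^{-1}(t)\big|>1$ then $t\in Q(\phi)$ and $x\in\phi^{-1}(t)$. Therefore $F_\mu\equiv0$ on $K$, so $\mu=0$, which completes the argument.

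I do not expect a serious obstacle in this approach; the only points needing some care are the two set identities $\phi^{-1}\big[[0,t]\big]=[0,b_t]$ and $R_t^{-1}\big[[a_t,c]\big]=[0,c]$, which follow from the monotonicity of $\phi$ and of $R_t$, together with the identification of the annihilation conditions with the vanishing of $\phi_*\mu$ and of $(R_t)_*\mu$. One could instead attempt a constructive proof, directly approximating a given $f\in C(K)$ by a sum $f_0\circ\phi+\sum_{i=1}^n h_i\circ R_{t_i}$ and isolating the finitely many fibers on which $f$ fails to be nearly constant, but that route looks messier, so I would prefer the duality argument above.
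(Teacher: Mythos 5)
Your proof is correct, but it proceeds along a genuinely different route from the paper's. You argue by duality: a functional $\mu\in M(K)=C(K)^*$ annihilating \eqref{eq:lindenso} satisfies $\phi_*\mu=0$ and $(R_t)_*\mu=0$ for $t\in Q(\phi)$, and the two set identities $\phi^{-1}\big[[0,t]\big]=[0,b_t]$ and $R_t^{-1}\big[[a_t,c]\big]=[0,c]$ (for $a_t\le c<b_t$) then force $F_\mu$ to vanish at every point of $K$, since each $x\in K$ is either the maximum of its fiber or lies in a fiber over $Q(\phi)$; injectivity of $\mu\mapsto F_\mu$ from Lemma~\ref{thm:MKNBVK} finishes the argument. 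All the steps check out (the pushforwards are the adjoints $\phi_*$, $(R_t)_*$ already introduced in the paper, so the identification of the annihilation conditions is immediate, and the strict inequality $c<b_t$ is exactly what keeps $[b_t,\max K]$ out of $R_t^{-1}\big[[a_t,c]\big]$). The paper instead gives a constructive decomposition: it invokes the linear density of continuous increasing functions in $C(K)$ (Kubi\'s), and for an increasing $f$ it forms $g=\sum_{t\in Q(\phi)}g_t$, where $g_t$ matches $f-f(a_t)$ on the fiber; the sum converges absolutely because $\sum_t\big(f(b_t)-f(a_t)\big)\le f(\max K)-f(0)$, and $f-g$ is fiberwise constant, hence in $\phi^*C(L)$. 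Your approach avoids quoting Kubi\'s's density result and is arguably shorter, at the cost of routing everything through the dual space and the $\NBV$ representation; the paper's approach yields explicit approximants and makes visible the bounded-variation bookkeeping that reappears elsewhere in the paper (e.g.\ in the proof of Proposition~\ref{thm:extcriteria}). Either argument is a complete proof of the lemma.
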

\begin{proof}
By \cite[Proposition~3.2]{Kubis}, the set of continuous increasing functions is linearly dense in $C(K)$
and therefore it suffices to pick an arbitrary increasing function $f\in C(K)$ and prove that it belongs to
the closed linear span of \eqref{eq:lindenso}. For $t\in Q(\phi)$, write $\phi^{-1}(t)=[a_t,b_t]$ and
let $g_t\in R_t^*C\big(\phi^{-1}(t)\big)$ be the map that agrees with $f-f(a_t)$ on $[a_t,b_t]$. Then:
\[\sum_{t\in Q(\phi)}\Vert g_t\Vert=\sum_{t\in Q(\phi)}\big(f(b_t)-f(a_t)\big)\le f(\max K)-f(0)<+\infty,\]
and therefore the sum $\sum_{t\in Q(\phi)}g_t$ converges to some $g\in C(K)$, which belongs to the
closed linear span of $\bigcup_{t\in Q(\phi)}R_t^*C\big(\phi^{-1}(t)\big)$. Moreover, $f-g$ is constant
on $[a_t,b_t]$, for all $t\in Q(\phi)$, and hence $f-g\in\phi^*C(L)$.
\end{proof}

\begin{cor}\label{thm:corweakstar}
Let $\mu\in M(K)$ and $(\mu_i)_{i\in I}$ be a bounded net in $M(K)$. Then $\mu_i\xrightarrow{\;\text{w*}\;}\mu$
if and only if
$\phi_*(\mu_i)\xrightarrow{\;\text{w*}\;}\phi_*(\mu)$ and $(R_t)_*(\mu_i)\xrightarrow{\;\text{w*}\;}(R_t)_*(\mu)$,
for all $t\in Q(\phi)$, where $R_t$ is defined as in the statement of Lemma~\ref{thm:lindense}.
\end{cor}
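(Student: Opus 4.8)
The \textbf{only if} implication is immediate and does not even use the boundedness hypothesis: by construction $\phi_*$ and $(R_t)_*$ are the adjoints of the bounded operators $\phi^*:C(L)\to C(K)$ and $R_t^*:C\big(\phi^{-1}(t)\big)\to C(K)$, and the adjoint of a bounded operator is weak*-to-weak* continuous; hence $\mu_i\xrightarrow{\;\text{w*}\;}\mu$ forces $\phi_*(\mu_i)\xrightarrow{\;\text{w*}\;}\phi_*(\mu)$ and $(R_t)_*(\mu_i)\xrightarrow{\;\text{w*}\;}(R_t)_*(\mu)$ for every $t\in Q(\phi)$.

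For the converse, the plan is to use the standard fact that a \emph{bounded} net in the dual of a Banach space $X$ converges weak* to a prescribed functional as soon as the pairing converges pointwise on a subset of $X$ whose linear span is dense; this is the usual $3\varepsilon$-estimate, where the uniform bound on the net is exactly what controls the error when passing from a general $f\in C(K)$ to a nearby element of the dense span. I would apply this to $X=C(K)$ with the linearly dense set \eqref{eq:lindenso} provided by Lemma~\ref{thm:lindense}; thus it is enough to check that $\int_K f\,\dd\mu_i\to\int_K f\,\dd\mu$ whenever $f\in\phi^*C(L)$ or $f\in R_t^*C\big(\phi^{-1}(t)\big)$ for some $t\in Q(\phi)$. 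By the definition of the pushforward measures one has, for $g\in C(L)$ and $h\in C\big(\phi^{-1}(t)\big)$,
\[\int_K \phi^*g\,\dd\mu_i=\int_L g\,\dd\phi_*(\mu_i),\qquad \int_K R_t^*h\,\dd\mu_i=\int_{\phi^{-1}(t)} h\,\dd(R_t)_*(\mu_i),\]
and the same identities with $\mu$ in place of $\mu_i$. So the hypotheses $\phi_*(\mu_i)\xrightarrow{\;\text{w*}\;}\phi_*(\mu)$ and $(R_t)_*(\mu_i)\xrightarrow{\;\text{w*}\;}(R_t)_*(\mu)$ say precisely that $\int_K f\,\dd\mu_i\to\int_K f\,\dd\mu$ for every $f$ in \eqref{eq:lindenso}, and the reduction above then yields $\mu_i\xrightarrow{\;\text{w*}\;}\mu$.

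The only point demanding a little attention is this reduction to the dense set, which is where the boundedness of $(\mu_i)_{i\in I}$ is genuinely needed (boundedness also guarantees that $\big(\phi_*(\mu_i)\big)_{i\in I}$ and $\big((R_t)_*(\mu_i)\big)_{i\in I}$ are bounded nets, so the weak*-convergence assumptions on them are the honest ones). I do not expect any real obstacle: no uniformity in $t$ is required, so the possibly uncountable cardinality of $Q(\phi)$ is irrelevant, and everything else is routine manipulation of adjoint operators and pushforward measures.
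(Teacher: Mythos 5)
Your proof is correct and follows the paper's own route: both directions come from the weak*-continuity of the adjoints $\phi_*$ and $(R_t)_*$ together with the fact that a bounded net in $M(K)$ converges weak* if and only if it converges against the linearly dense set of Lemma~\ref{thm:lindense}. The paper simply states this in one line, while you have spelled out the standard approximation argument explicitly.
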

\begin{proof}
It follows directly from Lemma~\ref{thm:lindense}, observing that a bounded net converges in the weak*-topology
if and only if it converges at the points of a linearly dense set.
\end{proof}

\begin{lem}\label{thm:externalpoints}
Given a weak*-null sequence $(F_n)_{n\ge1}$ in $\NBV(L)$, the set of external points
$t$ of $L$ such that $F_n(t)\notarrow0$ is countable.
\end{lem}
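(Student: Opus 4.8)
The plan is to transfer the statement to the space $M(L)$ of measures, identified with the dual of $C(L)$, by means of the isometry of Lemma~\ref{thm:MKNBVK}. I would write $F_n=F_{\mu_n}$ with $\mu_n\in M(L)$, so that $(\mu_n)_{n\ge1}$ is a weak*-null sequence in $M(L)$ and, by the uniform boundedness principle, $C:=\sup_n\Vert\mu_n\Vert<+\infty$. Since $F_n(t)=\mu_n\big([0,t]\big)$ by definition of $F_{\mu_n}$, the task becomes to show that the set of external points $t$ of $L$ with $\mu_n\big([0,t]\big)\notarrow0$ is countable.

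First I would dispose of the right-isolated external points. If $t$ is right-isolated in $L$, then either $t=\max L$, so that $[0,t]=L$, or $t$ has an immediate successor $t^+$, so that $[0,t]=\{s\in L:s<t^+\}$; in both cases $[0,t]$ is clopen, hence $\chi_{[0,t]}\in C(L)$ and $\mu_n\big([0,t]\big)\to0$. An external point that is not right-isolated is left-isolated, hence is either $0$ or has an immediate predecessor $t^-$; in the latter case $[0,t^-]=\{s\in L:s<t\}$ is clopen, so $\mu_n\big([0,t^-]\big)\to0$, and from $[0,t]=[0,t^-]\cup\{t\}$ one gets $\mu_n\big([0,t]\big)=\mu_n\big([0,t^-]\big)+\mu_n(\{t\})$, while for $t=0$ one simply has $\mu_n\big([0,t]\big)=\mu_n(\{t\})$. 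In either situation $\mu_n\big([0,t]\big)\notarrow0$ forces $\mu_n(\{t\})\notarrow0$. Hence the set to be bounded is contained in $A:=\{t\in L:\mu_n(\{t\})\notarrow0\}$, and it suffices to prove that $A$ is countable.

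For the countability of $A$: if $t\in A$ then $\limsup_n\vert\mu_n(\{t\})\vert>0$, so $t$ lies in $A_k:=\{t\in L:\vert\mu_n(\{t\})\vert>1/k\text{ for infinitely many }n\}$ for some $k\ge1$, giving $A=\bigcup_{k\ge1}A_k\subseteq\bigcup_{k\ge1}\bigcup_{n\ge1}\{t\in L:\vert\mu_n(\{t\})\vert>1/k\}$. For fixed $k$ and $n$, the points $t$ with $\vert\mu_n(\{t\})\vert>1/k$ are finite in number, since the corresponding singletons are pairwise disjoint and the total variation of $\mu_n$ is at most $C$; thus $A$ is a countable union of finite sets, as desired. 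I expect the only delicate point to be the order-topological bookkeeping of the second paragraph — matching the terms \emph{right-isolated}, \emph{left-isolated} and \emph{external} to the precise conditions under which $[0,t]$ or $[0,t^-]$ is clopen — whereas the last paragraph is a soft counting argument using nothing beyond the boundedness of $(\mu_n)_{n\ge1}$, and the weak*-null hypothesis enters only through the clopen sets $[0,t]$ and $[0,t^-]$.
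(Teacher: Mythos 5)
Your proof is correct and follows essentially the same route as the paper: right-isolated points are handled via the clopen initial segments $[0,t]$, and left-isolated points are reduced to the jump at $t$, i.e.\ the atom $\mu_n(\{t\})=F_n(t)-F_n(t^-)$, whose relevance is controlled by the finiteness of the total variation. Your counting of ``large'' atoms over all $n$ and thresholds $1/k$ is just a minor variant of the paper's observation that, for each $n$, the jump vanishes outside a countable set of left-isolated points.
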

\begin{proof}
If $t$ is right isolated in $L$, then $[0,t]$ is clopen in $L$ and therefore $F_n(t)\longrightarrow0$. If $t\ne0$ is left isolated in $L$, then it admits an immediate predecessor $t^-\in L$, which is right isolated.
The fact that $F_n$ has bounded variation implies that $F_n(t)=F_n(t^-)$ for all but a countable number
of left isolated points $t\ne0$ of $L$. The conclusion follows.
\end{proof}

\begin{prop}\label{thm:extcriteria}
Let $T:C(L)\to c_0$ be a bounded operator associated with a weak*-null sequence $(F_n)_{n\ge1}$ in $\NBV(L)$. The following conditions are equivalent:
\begin{itemize}
\item[(a)] there exists a bounded operator $T':C(K)\to c_0$ with $T'\circ\phi^*=T$;
\item[(b)] the set of points $t\in Q(\phi)$ such that $F_n(t)\notarrow0$ is countable;
\item[(c)] the set of points $t\in Q_0(\phi)$ such that $F_n(t)\notarrow0$ is countable;
\item[(d)] there exists a bounded operator $T':C(K)\to c_0$ with $T'\circ\phi^*=T$ and
$\Vert T'\Vert\le2\Vert T\Vert$.
\end{itemize}
\end{prop}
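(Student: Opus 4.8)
\smallskip

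The plan is to prove (a)$\Rightarrow$(b), (b)$\Leftrightarrow$(c), (c)$\Rightarrow$(d) and (d)$\Rightarrow$(a). The implication (d)$\Rightarrow$(a) is trivial, and (b)$\Leftrightarrow$(c) is immediate from Lemma~\ref{thm:externalpoints}: every point of $Q(\phi)\setminus Q_0(\phi)$ is external in $L$, so $\{t\in Q(\phi)\setminus Q_0(\phi):F_n(t)\notarrow0\}$ is countable, whence the sets in (b) and (c) differ by a countable set and are countable simultaneously. Throughout, $\mu_n\in M(L)$ denotes the measure corresponding to $F_n$ under Lemma~\ref{thm:MKNBVK}, so that $F_n(t)=\mu_n\big([0,t]\big)$, and $\Vert T\Vert=\sup_n\Vert\mu_n\Vert$. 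The content of the proposition lies in (a)$\Rightarrow$(b) and in the construction for (c)$\Rightarrow$(d).

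For (a)$\Rightarrow$(b): let $T'$ be associated with a weak*-null sequence $(\nu_n)_{n\ge1}$ in $M(K)$ with $\phi_*(\nu_n)=\mu_n$, and put $M=\sup_n\Vert\nu_n\Vert$. Write $\phi^{-1}(t)=[a_t,b_t]$ for $t\in Q(\phi)$. Since the fibres $\phi^{-1}(t)$ are pairwise disjoint, $\sum_{t\in Q(\phi)}\vert\nu_n\vert\big(\phi^{-1}(t)\big)\le M$ for every $n$; a routine counting argument shows that $Z_1=\{t\in Q(\phi):\vert\nu_n\vert(\phi^{-1}(t))\notarrow0\}$ is countable, and likewise $Z_2=\{t\in Q(\phi):\mu_n(\{t\})\notarrow0\}$ is countable, since each $\mu_n$ has only countably many atoms. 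It suffices to show $F_n(t)\to0$ for every $t\in Q(\phi)\setminus(Z_1\cup Z_2)$. Fix such a $t$. By Corollary~\ref{thm:corweakstar}, $(R_t)_*(\nu_n)\to0$ weak* in $M\big(\phi^{-1}(t)\big)$. Choosing $g\in C\big(\phi^{-1}(t)\big)$ with $0\le g\le1$, $g(a_t)=1$ and $g(b_t)=0$ (possible as $a_t<b_t$), and using $R_t^{-1}(\{a_t\})=[0,a_t]$, $R_t^{-1}(\{b_t\})=[b_t,\max K]$, $R_t^{-1}(E)=E$ for $E\subseteq\,]a_t,b_t[$, together with $\nu_n([0,a_t[)=\nu_n\big(\phi^{-1}([0,t[)\big)=\mu_n([0,t[)$, one gets
\[\int g\,\dd(R_t)_*(\nu_n)=\mu_n\big([0,t[\big)+\nu_n(\{a_t\})+\int_{]a_t,b_t[}g\,\dd\nu_n.\]
The left-hand side tends to $0$, and the last two terms are bounded in modulus by $\vert\nu_n\vert\big(\phi^{-1}(t)\big)\to0$, so $\mu_n([0,t[)\to0$; since $t\notin Z_2$, also $F_n(t)=\mu_n([0,t[)+\mu_n(\{t\})\to0$.

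For (c)$\Rightarrow$(d): let $j:L\to K$, $j(t)=\min\phi^{-1}(t)$, and $\tilde\nu_n=j_*(\mu_n)$, so that $\phi_*(\tilde\nu_n)=\mu_n$ and $\Vert\tilde\nu_n\Vert\le\Vert\mu_n\Vert\le\Vert T\Vert$. A computation like the one above gives $(R_t)_*(\tilde\nu_n)=F_n(t)\delta_{a_t}+\big(\mu_n(L)-F_n(t)\big)\delta_{b_t}$, which tends to $0$ weak* whenever $F_n(t)\to0$; by (b)$\Leftrightarrow$(c) the set $Z=\{t\in Q(\phi):F_n(t)\notarrow0\}$ is countable, so by Corollary~\ref{thm:corweakstar} the only obstruction to weak*-nullity of $(\tilde\nu_n)$ comes from the fibres $t\in Z$. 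One then corrects $\tilde\nu_n$ by a bounded sequence $\sigma_n\in(\phi^*C(L))^\perp$ — that is, $\sigma_n$ supported on $\bigcup_{t\in Q(\phi)}\phi^{-1}(t)$ with zero mass on each fibre — repairing the fibres of $Z$. Since $Z$ is countable, this is a Sobczyk-type situation: enumerate $Z=\{t_k\}$ and, at stage $n$, repair only the fibres $t_1,\dots,t_{m_n}$ with $m_n\to\infty$; weak*-nullity of $(\tilde\nu_n-\sigma_n)$ follows because each fixed fibre is eventually repaired, and keeping $\Vert\sigma_n\Vert\le\Vert T\Vert$ yields $\Vert T'\Vert\le2\Vert T\Vert$.

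I expect the real difficulty to be precisely this norm control. Repairing a single fibre $t\in Z$ calls for a zero-mass measure on $\phi^{-1}(t)$ roughly equal to $F_n(t)\big(\delta_{b_t}-\delta_{a_t}\big)$, of variation $\approx2\vert F_n(t)\vert$, and summed over an infinite $Z$ this need not be bounded; hence the diagonalization parameters $m_n$ and the exact shape of the repairs must be arranged so that, at every stage $n$, the total variation spent on corrections stays inside the ball of radius $\Vert T\Vert$. This is where one must use quantitatively how weak*-nullity of $(\mu_n)$ restricts the values $F_n(t)$ for $t\in Z$, and it is the heart of the argument.
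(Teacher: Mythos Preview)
Your argument for (a)$\Rightarrow$(b) is correct, though the paper's is shorter: once one observes that the set $\bigcup_{n\ge1}\{t\in L:\vert\mu'_n\vert(\phi^{-1}(t))>0\}$ is countable, for any $t\in Q(\phi)$ outside it a single continuous $f$ with $f\vert_{[0,a_t]}\equiv1$ and $f\vert_{[b_t,\max K]}\equiv0$ gives $F_n(t)=F'_n(b_t)=\int_K f\,\dd\mu'_n\to0$ directly, with no need to split off your set $Z_2$ or to invoke Corollary~\ref{thm:corweakstar}. Still, your route works.

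The genuine gap is in (c)$\Rightarrow$(d). Your candidate $\tilde\nu_n=j_*(\mu_n)$ is in fact the same measure the paper uses --- it is precisely the element of $M(K)$ corresponding to $G_n=F_n\circ\phi\in\NBV(K)$ --- so the starting point is right. But the repair scheme you sketch does not close, and you correctly sense this. To fix the fibre $t_k$ you need a zero-mass correction whose $(R_{t_k})_*$-image asymptotically cancels $F_n(t_k)\delta_{a_{t_k}}+\big(\mu_n(L)-F_n(t_k)\big)\delta_{b_{t_k}}$; the cheapest such correction costs about $2\vert F_n(t_k)\vert$ in variation, and summing over $k\le m_n$ gives $2\sum_{k\le m_n}\vert F_n(t_k)\vert$. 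Weak*-nullity of $(\mu_n)$ imposes no summability whatsoever on the values $F_n(t_k)=\mu_n\big([0,t_k]\big)$ --- a single $t_k\in Z$ can already satisfy $\vert F_n(t_k)\vert\ge\tfrac12\Vert T\Vert$ for all $n$ --- so there is no way to push $m_n\to\infty$ while keeping $\Vert\sigma_n\Vert\le\Vert T\Vert$. The ``quantitative'' input you hope for simply is not there.

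The paper sidesteps the repair entirely. Let $S:C(K)\to\ell_\infty$ be the operator associated with $(G_n)_{n\ge1}$, so that $\Vert S\Vert=\Vert T\Vert$ and $S\circ\phi^*=T$. The point is that the quotient $C(K)/S^{-1}[c_0]$ is \emph{separable}. Using the linearly dense set of Lemma~\ref{thm:lindense}: one has $\phi^*C(L)\subset S^{-1}[c_0]$; from the formula $(R_t)_*(\tilde\nu_n)=F_n(t)\delta_{a_t}+\big(F_n(\max L)-F_n(t)\big)\delta_{b_t}$ (which you also computed) one gets $R_t^*C\big(\phi^{-1}(t)\big)\subset S^{-1}[c_0]$ for every $t\in Q(\phi)\setminus Z$; and for each of the countably many $t\in Z$ the same formula shows that $g\in C\big(\phi^{-1}(t)\big)$ with $g(a_t)=g(b_t)=0$ lies in $\Ker S$, so the image of $R_t^*C\big(\phi^{-1}(t)\big)$ in the quotient is at most two-dimensional. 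Hence the quotient is separable, and one then invokes \cite[Proposition~2.2,~(a)]{CT}: whenever $X/S^{-1}[c_0]$ is separable, $S\vert_{S^{-1}[c_0]}$ extends to some $T':X\to c_0$ with $\Vert T'\Vert\le2\Vert S\Vert$. This external Sobczyk-type lemma is exactly what absorbs the norm-control problem that blocks your direct construction.
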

\begin{proof}
The equivalence between (b) and (c) follows directly from Lemma~\ref{thm:externalpoints}. For all $t\in L$,
write $\phi^{-1}(t)=[a_t,b_t]$. Now assume (a) and
let us prove (b). The operator $T'$ is associated with a weak*-null sequence $(F'_n)_{n\ge1}$ in $\NBV(K)$
and the equality $T'\circ\phi^*=T$ is equivalent to $F'_n(b_t)=F_n(t)$, for all $n\ge1$ and all $t\in L$.
If $\mu'_n\in M(K)$ corresponds to $F'_n$ through \eqref{eq:muFmu}, then $\sum_{t\in L}\vert\mu'_n\vert\big([a_t,b_t]\big)\le\Vert\mu'_n\Vert<\infty$ and therefore the set:
\begin{equation}\label{eq:countable}
\bigcup_{n=1}^\infty\big\{t\in L:\vert\mu'_n\vert\big([a_t,b_t]\big)>0\big\}
\end{equation}
is countable. Given $t\in Q(\phi)$ not in \eqref{eq:countable}, we claim that $F_n(t)\longrightarrow0$.
To prove the claim, let $f\in C(K)$ satisfy $f\vert_{[0,a_t]}\equiv1$ and $f\vert_{[b_t,\max K]}\equiv0$, and note that:
\[F_n(t)=F'_n(b_t)=\int_Kf\,\dd\mu'_n\longrightarrow0.\]
Now assume (b) and let us prove (d). The set:
\[E=\big\{t\in Q(\phi):F_n(t)\notarrow0\big\},\]
is countable.
For each $n\ge1$, set $G_n=F_n\circ\phi$. Then $G_n$ is in $\NBV(K)$ and
$\Vert G_n\Vert_{\BV}=\Vert F_n\Vert_{\BV}$. Let $S:C(K)\to\ell_\infty$ be the bounded operator associated
with the bounded sequence $(G_n)_{n\ge1}$, so that $\Vert S\Vert=\Vert T\Vert$. Note that $G_n(b_t)=F_n(t)$, for all $n\ge1$ and
$t\in L$, from which it follows that $S\circ\phi^*=T$. We will show that the quotient $C(K)/S^{-1}[c_0]$ is separable and it will follow from \cite[Proposition~2.2,~(a)]{CT} that the operator $S\vert_{S^{-1}[c_0]}$ admits an extension $T':C(K)\to c_0$ with $\Vert T'\Vert\le2\Vert S\Vert$. To conclude the proof of the proposition, it suffices to check that
the image of \eqref{eq:lindenso} under the quotient map $C(K)\to C(K)/S^{-1}[c_0]$ is separable.
Note first that $\phi^*C(L)$ is contained in $S^{-1}[c_0]$. Our plan is to show that the image of
$R_t^*C\big(\phi^{-1}(t)\big)$ under the quotient map $C(K)\to C(K)/S^{-1}[c_0]$ is finite-dimensional,
for all $t\in L$, and that $R_t^*C\big(\phi^{-1}(t)\big)\subset S^{-1}[c_0]$, for all $t\in Q(\phi)\setminus E$.

If $\nu_n\in M(K)$ corresponds to $G_n$ through \eqref{eq:muFmu},
then a simple computation yields:
\[(R_t)_*(\nu_n)=F_n(t)\delta_{a_t}+\big(F_n(\max L)-F_n(t)\big)\delta_{b_t}\in M\big(\phi^{-1}(t)\big),\]
for all $t\in L$. Thus, for $g\in C\big(\phi^{-1}(t)\big)$, we have:
\begin{equation}\label{eq:SRtg}
S\big(R_t^*(g)\big)=\Big(F_n(t)g(a_t)+\big(F_n(\max L)-F_n(t)\big)g(b_t)\Big)_{\!n\ge1}\in\ell_\infty.
\end{equation}
From \eqref{eq:SRtg} it follows that $R_t^*C\big(\phi^{-1}(t)\big)\subset S^{-1}[c_0]$, for all $t\in Q(\phi)\setminus E$,
and that $R_t^*(g)\in\Ker(S)\subset S^{-1}[c_0]$, for all $t\in L$ and all $g\in C\big(\phi^{-1}(t)\big)$ satisfying $g(a_t)=g(b_t)=0$.
This concludes the proof.
\end{proof}

We now state the main result of the paper. Its proof requires several technical lemmas and is left to the end
of this section.
\begin{teo}\label{thm:main}
Let $K$ and $L$ be compact lines. A continuous increasing surjection $\phi:K\to L$ has the \EP\ if and only if the following condition holds: for every separable $G_\delta$ subset
$A$ of $L$, if every point of $A$ is a two-sided limit point of $A$ (relatively to $L$),
then $A\cap Q(\phi)=A\cap Q_0(\phi)$ is countable.
\end{teo}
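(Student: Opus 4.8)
The plan is to recast the statement, via Proposition~\ref{thm:extcriteria}, as an equivalence about weak*-null sequences and then to prove each implication by contraposition. By definition $\phi$ has the \EP\ exactly when every bounded operator $T\colon C(L)\to c_0$ admits $T'\colon C(K)\to c_0$ with $T'\circ\phi^*=T$; identifying $T$ with a weak*-null sequence $(F_n)_{n\ge1}$ in $\NBV(L)$ and invoking the equivalence of~(a) and~(c) in Proposition~\ref{thm:extcriteria}, this holds if and only if $B(F_n):=\{t\in Q_0(\phi):F_n(t)\notarrow0\}$ is countable. Since every point of a set $A$ as in the Theorem is a two-sided limit point of $A$, hence an internal point of $L$, one automatically has $A\cap Q(\phi)=A\cap Q_0(\phi)$. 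Thus it suffices to prove that $B(F_n)$ is countable for all weak*-null $(F_n)$ in $\NBV(L)$ if and only if $A\cap Q_0(\phi)$ is countable for every separable $G_\delta$ set $A\subseteq L$ all of whose points are two-sided limit points of $A$.

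\emph{Constructing a bad sequence from a bad configuration.} Assume $A$ is separable and $G_\delta$, all its points are two-sided limit points of $A$, and $A\cap Q_0(\phi)$ is uncountable. I would first extract from $A\cap Q_0(\phi)$ a set $P$ that is uncountable, closed in $L$, contained in $Q_0(\phi)$, and dense-in-itself (every point a two-sided limit point of $P$); securing such a $P$ is the first of the two main technical points (see below), and note that $P$, being separable and self-dense, is metrizable. Fix $(g_j)_{j\ge1}$ in $C(L)$ whose restrictions to $P$ are dense in the unit ball of $C(P)$, and build recursively a dyadic tree of closed subintervals $[a_s,b_s]$ of $L$, $s\in2^{<\omega}$, with all endpoints in $P$, such that: $a_s<a_{si}<b_{si}<b_s$ for $i=0,1$; the successor intervals $[a_{s0},b_{s0}]$ and $[a_{s1},b_{s1}]$ are disjoint; $\left]a_s,b_s\right[\cap P\ne\emptyset$; and $|g_j(a_s)-g_j(b_s)|<2^{-|s|}$ for $j\le|s|$. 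These requirements are met because $P$ is dense-in-itself and its points are two-sided limit points of $L$. For $\sigma\in2^\omega$ put $t_\sigma:=\bigcap_n[a_{\sigma|n},b_{\sigma|n}]$: the $g_j$-estimates force $(a_{\sigma|n})_n$ and $(b_{\sigma|n})_n$ to converge to a common limit, so $t_\sigma$ is a single point, it lies in every $\left]a_{\sigma|n},b_{\sigma|n}\right[$, the $t_\sigma$ are pairwise distinct, and $t_\sigma\in P\subseteq Q_0(\phi)$ since $a_{\sigma|n}\in P$ and $P$ is closed. Now enumerate the intervals $[a_s,b_s]$ as $[\alpha_k,\beta_k]$ with $|s|$ non-decreasing and put $\mu_k=\delta_{\alpha_k}-\delta_{\beta_k}$, $F_k=F_{\mu_k}$, so $F_k=\mathds 1_{[\alpha_k,\beta_k[}$ and $(F_k)$ is bounded in $\NBV(L)$. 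A three-$\varepsilon$ estimate using the $g_j$ shows $g(\alpha_k)-g(\beta_k)\to0$ for every $g\in C(L)$, i.e.\ $(\mu_k)$ is weak*-null; and $F_k(t_\sigma)=1$ for the infinitely many $k$ whose interval lies along the branch $\sigma$, so each $t_\sigma$ belongs to $B(F_k)$ and $B(F_k)$ is uncountable.

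\emph{Constructing a bad configuration from a bad sequence.} Conversely, let $(F_n)$ be weak*-null in $\NBV(L)$ with $B(F_n)$ uncountable, and fix by pigeonhole $\delta>0$ with $B_\delta:=\{t\in Q_0(\phi):\limsup_n|F_n(t)|>\delta\}$ uncountable. I would first show that $B_\delta$ is separable, deriving this from structural properties of weak*-null sequences of functions of uniformly bounded variation, whose ``oscillation set'' $\bigcup_n\{|F_n|>\delta\}$ is confined to a separable part of $L$. Then, letting $A$ be the set of two-sided condensation points of $B_\delta$ in $L$: $A\subseteq\overline{B_\delta}$ is separable and closed, $B_\delta\setminus A$ is countable (separable compact lines being hereditarily Lindel\"of), so $A\cap Q_0(\phi)\supseteq A\cap B_\delta$ is uncountable, and every point of $A$ is a two-sided limit point of $A$ (being a two-sided condensation point of $A\cap B_\delta\subseteq A$). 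It remains to check that $A$ is $G_\delta$ in $L$, which I would obtain from the way the separable closed set $\overline{B_\delta}$ sits inside the compact line $L$. Then $A$ witnesses the failure of the order condition.

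\emph{Main obstacle.} The routine parts are the reduction, the three-$\varepsilon$ verification, and the condensation/pigeonhole bookkeeping. The genuinely hard points — presumably the content of the paper's several technical lemmas — are: (i) that $Q_0(\phi)$ is tame enough in $L$ that an uncountable subset of it lying in a separable self-dense $G_\delta$ set contains a closed, uncountable, dense-in-itself subset of $Q_0(\phi)$ (so that the tree above can keep its branch-points inside $Q_0(\phi)$); and (ii) that the set manufactured from a bad sequence is separable and $G_\delta$ in $L$. Both hinge on the order structure of $L$ and on the fact that $\{\phi^{-1}(t):t\in Q(\phi)\}$ is a family of pairwise disjoint closed intervals of $K$.
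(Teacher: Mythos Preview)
Your reduction via Proposition~\ref{thm:extcriteria} is correct, but the first implication (bad configuration $\Rightarrow$ bad sequence) has a genuine gap that cannot be repaired along the lines you propose. You need to extract from $A\cap Q_0(\phi)$ an uncountable closed-in-$L$, dense-in-itself set $P\subset Q_0(\phi)$, so that the branch limits $t_\sigma$ of your dyadic tree remain in $Q_0(\phi)$. No such $P$ need exist: take $L=[0,1]$, let $B\subset\left]0,1\right[$ be a Bernstein set, $K=\DA(B)$, $\phi=\pi_1$, so that $Q_0(\phi)=B$; then $A=\left]0,1\right[$ is a separable $G_\delta$ whose every point is a two-sided limit point of $A$ and $A\cap Q_0(\phi)=B$ is uncountable, yet $B$ contains no uncountable closed subset of $[0,1]$ whatsoever. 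By the theorem $\phi$ fails the \EP, so a bad weak*-null sequence must exist; the paper obtains it (Lemmas~\ref{thm:lemagatolegal} and~\ref{thm:Gdeltaweakstar}) by building $(F_n)$ whose non-convergence set equals $A$ up to a countable set --- using only that $A$ is $G_\delta$ in a separable compact line --- and never tries to confine the construction to $Q_0(\phi)$.

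Your converse implication is closer to the paper's argument, but the claim that your closed set $A$ is $G_\delta$ in $L$ is not justified: a closed separable subset of a (possibly non-separable) compact line need not be $G_\delta$. The paper secures the $G_\delta$ property differently: Lemma~\ref{thm:Gdeltasigma} shows that $\{t\in L:F_n(t)\notarrow0\}$ is, up to a countable set, a countable union of $G_\delta$ sets $C_m$; the condensation-point step (Lemma~\ref{thm:CA}) is then applied \emph{inside} each $C_m$, yielding $A_m=C_m\setminus(\text{countable})$, which is automatically $G_\delta$.
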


The proof of the following lemma is a simple adaptation of the proof of \cite[Lemma~2.5]{KK}.
\begin{lem}\label{thm:lemapsi}
If $H$ is an uncountable separable compact line, then there exists a continuous increasing surjection $\psi:H\to[0,1]$
such that $\psi^{-1}(u)$ is countable, for all $u\in[0,1]$.
\end{lem}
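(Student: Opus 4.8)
The plan is to obtain $\psi$ as the composition of a ``collapsing'' surjection $\pi\colon H\to L'$ with an order-isomorphism of $L'$ onto $[0,1]$, mimicking the proof of \cite[Lemma~2.5]{KK}. Throughout I would use the standard fact that a separable compact line is hereditarily Lindel\"of; in particular, every open subset of $H$ is Lindel\"of.

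Define a relation $\sim$ on $H$ by declaring $x\sim y$ when the closed interval of $H$ with endpoints $x$ and $y$ is countable. Since the union of two countable intervals sharing an endpoint is again a countable interval, $\sim$ is an equivalence relation whose classes are convex. The crucial point is that every class $C$ is a \emph{countable} closed interval of $H$. To prove it, let $\alpha=\inf C$ and $\beta=\sup C$ (these exist by compactness of $H$). If $\alpha<z<\beta$, pick $c,c'\in C$ with $c<z<c'$; then $c\sim c'$, so $\left]c,c'\right[$ is a countable open neighbourhood of $z$ which, together with its endpoints, lies in $C$. Hence $\left]\alpha,\beta\right[\subseteq C$, and $\left]\alpha,\beta\right[$ is covered by such countable open intervals; being Lindel\"of, it is therefore countable. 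Consequently $[\alpha,\beta]$ is countable, $\alpha\sim x\sim\beta$ for all $x\in C$, and $C=[\alpha,\beta]$, a closed interval with $\min C=\alpha$ and $\max C=\beta$.

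Next, let $L'$ denote the set of $\sim$-classes, ordered by the order induced from $H$ and endowed with the order topology, and let $\pi\colon H\to L'$ be the quotient map. Since each class has a minimum and a maximum, $\pi$ is continuous — the preimage of the open ray $\{\,C'\in L':C'>\pi(x)\,\}$ is $\{\,y\in H:y>\max[x]_{\sim}\,\}$, which is open in $H$ — and $\pi$ is clearly increasing and surjective; being the continuous image of the compact line $H$, $L'$ is itself a compact line, and each fibre $\pi^{-1}(t)$ is a $\sim$-class, hence countable. Furthermore $L'$ has no jumps: were $\pi(x)<\pi(y)$ consecutive in $L'$, then, putting $m=\max[x]_{\sim}$ and $m'=\min[y]_{\sim}$, the interval $\left]m,m'\right[$ would be empty (a point of it would project strictly between $\pi(x)$ and $\pi(y)$), so $[m,m']=\{m,m'\}$ would be countable, forcing $m\sim m'$ and $\pi(x)=\pi(y)$. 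Finally $L'$ has more than one point — otherwise $H$ would be a single $\sim$-class and thus countable — and $L'$ is separable, being a continuous image of $H$. A separable compact line with more than one point and without jumps is order-isomorphic, hence homeomorphic, to $[0,1]$; composing $\pi$ with such a homeomorphism $h\colon L'\to[0,1]$ yields a continuous increasing surjection $\psi=h\circ\pi\colon H\to[0,1]$ for which $\psi^{-1}(u)=\pi^{-1}\big(h^{-1}(u)\big)$ is a countable $\sim$-class, for every $u\in[0,1]$.

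The main obstacle is the countability of the $\sim$-classes; this is the one place where the separability of $H$ — via hereditary Lindel\"ofness — is genuinely needed. Once that is established, checking that $L'$ is a separable compact line without jumps and with more than one point, hence a homeomorphic copy of $[0,1]$, is routine.
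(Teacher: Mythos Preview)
Your proposal is correct and follows essentially the same route as the paper: define the equivalence $\sim$ by countability of the spanned interval, show the classes are countable closed intervals, and identify the quotient with $[0,1]$. The paper's proof is a two-sentence sketch of exactly this argument; you have simply supplied the details (Lindel\"ofness for the countability of classes, the explicit check that the quotient has no jumps), all of which are handled correctly.
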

\begin{proof}
Define an equivalence relation $\sim$ on $H$ by stating that $t_1\sim t_2$ if and only if the closed interval
with endpoints $t_1$ and $t_2$ is countable. The separability of $H$ implies that the equivalence classes
of $\sim$ are countable closed intervals. Thus, there exists a unique linear order on the quotient $H/{\sim}$
such that the quotient map $\psi:H\to H/{\sim}$ is increasing and continuous. Finally, $H/{\sim}$
is a connected separable compact line with more than one point; therefore it is order-isomorphic to $[0,1]$.
\end{proof}

\begin{rem}\label{thm:likemetrizable}
We note that separable compact lines share a lot of properties of metrizable spaces. Namely, a separable
compact line is first countable and perfectly normal (i.e., it is normal and every closed set is a $G_\delta$ set).
To see that a separable compact line is perfectly normal, note that every open set is the countable union of
its convex components, which are countable unions of closed intervals. The latter argument also shows
that every open set in a separable compact line is $\sigma$-compact and hence separable compact lines are
hereditarily Lindel\"of. Moreover, separability is hereditary for compact lines, as will be shown in the next lemma.
\end{rem}

\begin{lem}\label{thm:hereditseparable}
A separable compact line is hereditarily separable.
\end{lem}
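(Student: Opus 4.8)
The plan is to exhibit, for an arbitrary subset $A$ of $K$, an explicit countable dense subset of $A$. Fix a countable dense subset $D$ of $K$. For each pair of points $d_1<d_2$ of $D$ with $\left]d_1,d_2\right[\cap A\ne\emptyset$, choose a point $p(d_1,d_2)\in\left]d_1,d_2\right[\cap A$, and let $Q$ denote the countable set of all such points $p(d_1,d_2)$. Let also $A_1$ denote the set of points of $A$ that are isolated in $A$; this is a discrete subspace of $K$, and since $K$ is hereditarily Lindel\"of by Remark~\ref{thm:likemetrizable}, the set $A_1$ is countable. The main task is then to prove that the countable set $Q\cup A_1$ is dense in $A$.

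For this, let $a\in A$ and let $U$ be a neighborhood of $a$ in $A$; the case $a\in A_1$ being trivial, assume $a$ is not isolated in $A$. Since $K$ is first countable (Remark~\ref{thm:likemetrizable}), there is a sequence $(a_k)_{k\ge1}$ in $A\setminus\{a\}$ converging to $a$, and after passing to a subsequence we may assume that $a_k>a$ for every $k$ or $a_k<a$ for every $k$. I would treat the first case, the second being entirely symmetric (reversing the order of $K$ yields a separable compact line with the same topology, for which the sets $Q$ and $A_1$ may be taken unchanged). Then $a$ is not the maximum of $K$ and, for every $y>a$, the interval $\left]a,y\right[$ is a nonempty open set meeting $A$ (it contains $a_k$ for all large $k$); in particular $a$ has no immediate successor in $K$. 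Writing $U=V\cap A$ with $V$ open in $K$ and $a\in V$, and using that the open intervals containing $a$ (together with the sets $\left[0,y\right[$, in case $a=0$) form a neighborhood basis of $a$ in $K$, fix $z>a$ with $\left[a,z\right[\subseteq V$.

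The heart of the argument is a ``sandwich'' of $a$ between points of $D$: using density of $D$ in $K$ together with the nonemptiness of the intervals $\left]a,y\right[$ for $y>a$, choose successively $d\in\left]a,z\right[\cap D$, then $a_1\in\left]a,d\right[\cap A$, then $d_1\in\left]a,a_1\right[\cap D$. This gives $a<d_1<a_1<d<z$, so that $a_1\in\left]d_1,d\right[\cap A$; hence $p(d_1,d)$ is defined and satisfies $p(d_1,d)\in\left]d_1,d\right[\cap A\subseteq\left]a,z\right[\cap A\subseteq\left[a,z\right[\cap A\subseteq V\cap A=U$. Therefore $U\cap Q\ne\emptyset$, and the density of $Q\cup A_1$ in $A$ follows.

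The step I expect to demand the most care is precisely the one the sandwich is meant to finesse. One is tempted to show that every non-isolated point of $A$ is a two-sided limit point of $A$ and then to argue with the finite partitions of $K$ determined by finite subsets of $D$; but this is false for general $A$ (already for subsets of the double-arrow line), because a point of $K$ may have an immediate predecessor, in which case $A$ can accumulate at it from one side only. First countability is what ensures that a non-isolated point of $A$ accumulates points of $A$ from at least one side, and the sandwich is then carried out on that side; hereditary Lindel\"ofness takes care of the genuinely isolated points, which are collected in $A_1$. These two properties of $K$, both provided by Remark~\ref{thm:likemetrizable}, together with separability and compactness, are all that the argument should require.
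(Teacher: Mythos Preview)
Your proof is correct, but it takes a different route from the paper's. The paper first invokes first countability to reduce to the case of a \emph{closed} subset $Z$: if every closed subset is separable, then given arbitrary $A$ one finds a countable dense subset of $\bar A$ and, by first countability, approximates each of its points by a sequence from $A$. For closed $Z$ the paper then exhibits the dense set $(D\cap Z)\cup\{\min Z,\max Z\}\cup\bigcup_{t\in S}\{t,t'\}$, where $S$ is the set of right-isolated points $t$ of $Z$ whose gap $\left]t,t'\right[$ in $H$ is nonempty; countability of $S$ follows from pairwise disjointness of these gaps and separability of $H$ alone.

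Your argument, by contrast, attacks an arbitrary $A$ directly: you select one witness $p(d_1,d_2)$ from each nonempty $\left]d_1,d_2\right[\cap A$ with $d_1,d_2\in D$, throw in the isolated points of $A$ (countable by hereditary Lindel\"ofness, Remark~\ref{thm:likemetrizable}), and use the sandwich to hit a given neighborhood. This is a perfectly valid and self-contained construction; the price is that you must appeal to hereditary Lindel\"ofness to control the isolated points, whereas the paper's gap-counting uses only separability. On the other hand, you avoid the two-step reduction through closures, and your sandwich argument makes the role of first countability (one-sided accumulation of $A$ at $a$) more explicit than the paper's somewhat terse reduction.
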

\begin{proof}
Let $H$ be a separable compact line. Since $H$ is first-countable, it suffices to prove that
every closed subset $Z$ of $H$ is separable. Let $D$ be a countable dense subset of $H$ and for $t\in Z\setminus\{\max Z\}$ right isolated in the compact line $Z$, denote by $t'$ the immediate successor of $t$ in $Z$. Set:
\[S=\big\{t\in Z\setminus\{\max Z\}:\text{$t$ right isolated in $Z$ and $\left]t,\smash{t'}\right[\ne\emptyset$}\big\}.\]
The intervals $\left]t,\smash{t'}\right[$, $t\in S$, are nonempty and pairwise disjoint; thus, the separability of $H$
implies that $S$ is countable. We claim that the countable set:
\[(D\cap Z)\cup\{\min Z,\max Z\}\cup\bigcup_{t\in S}\{t,t'\}\subset Z\]
is dense in $Z$. Namely, given $t_1,t_2\in Z$ with $\left]t_1,t_2\right[\cap Z\ne\emptyset$,
if $\left]t_1,t_2\right[\subset Z$, then $\left]t_1,t_2\right[$ intersects $D\cap Z$. Otherwise,
$\left]t_1,t_2\right[$ intersects $\{t,t'\}$, for some $t\in S$.
\end{proof}

\begin{lem}\label{thm:Usequence}
Let $U$ be open relatively to $\left[0,1\right[$ and let $\varepsilon>0$ be fixed. There
exists a sequence $\big(\left[a_k,b_k\right[\big)_{k\ge1}$ of pairwise disjoint intervals
such that:
\begin{itemize}
\item[(a)] $U=\bigcup_{k=1}^\infty\left[a_k,b_k\right[$;
\item[(b)] $b_k-a_k<\varepsilon$, for all $k\ge1$;
\item[(c)] $b_k-a_k\longrightarrow0$.
\end{itemize}
\end{lem}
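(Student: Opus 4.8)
The idea is to decompose $U$ into its connected components and then subdivide each component into short half-open intervals, being careful at the endpoints that do not belong to the component.

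\textbf{Step 1: the components of $U$.} Since $U$ is open in $\left[0,1\right[$, it is the disjoint union of its connected components, which form a countable, pairwise disjoint family of subintervals of $\left[0,1\right[$. One checks easily (using that these components are maximal subintervals of the open set $U$) that each of them is either of the form $\left[0,b\right[$ with $0<b\le1$ — there is at most one such component, and it occurs precisely when $0\in U$ — or of the form $\left]a,b\right[$ with $0\le a<b\le1$.

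\textbf{Step 2: subdividing a component.} I would show that every component $C$ of $U$ is a union of countably many pairwise disjoint intervals of the form $\left[a_k,b_k\right[$, each of length $<\varepsilon$. If $C=\left[0,b\right[$, pick a strictly increasing sequence $0=t_0<t_1<t_2<\cdots$ with $t_j\to b$ and all gaps $t_j-t_{j-1}<\varepsilon$ (for instance $t_j=b(1-r^j)$ with $r\in\left]0,1\right[$ close enough to $1$); then $C=\bigcup_{j\ge1}\left[t_{j-1},t_j\right[$. If $C=\left]a,b\right[$, fix $c\in\left]a,b\right[$, write $C=\left]a,c\right[\cup\left[c,b\right[$, handle $\left[c,b\right[$ as in the previous case, and cover $\left]a,c\right[$ by the intervals $\left[s_j,s_{j-1}\right[$, $j\ge1$, where $c=s_0>s_1>s_2>\cdots$ is strictly decreasing with $s_j\to a$ and all gaps $<\varepsilon$. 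Note that each nonempty component contributes infinitely many intervals in this way. I expect this bookkeeping at the non-included endpoints of the components — the $\left]a,\cdot\right[$ end has to be approached by infinitely many pieces from the right — to be the only mildly delicate point.

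\textbf{Step 3: assembling the family and controlling the lengths.} Collecting the intervals of Step 2 over all (countably many) components yields a countable family $\mathcal I$ of pairwise disjoint intervals of the form $\left[a_k,b_k\right[$, each of length $<\varepsilon$, with $\bigcup\mathcal I=U$; if $U=\emptyset$ the lemma is trivial (take $a_k=b_k=0$ for all $k$), and otherwise $\mathcal I$ is infinite. This already gives (a) and (b). For (c), observe that since the members of $\mathcal I$ are pairwise disjoint subintervals of $\left[0,1\right[$, for every $\delta>0$ at most $\lfloor1/\delta\rfloor$ of them have length $\ge\delta$. Consequently $\mathcal I$ can be enumerated as $\left(\left[a_k,b_k\right[\right)_{k\ge1}$ with $b_1-a_1\ge b_2-a_2\ge\cdots$ (concretely: first list the finitely many intervals of length $\ge1/2$, then those of length in $\left[1/3,1/2\right[$, then those in $\left[1/4,1/3\right[$, and so on), and for such an enumeration $b_k-a_k\to0$, which is (c).
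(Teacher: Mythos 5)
Your proof is correct and takes essentially the same route as the paper, whose entire argument is that the claim is trivial for connected $U$ and follows in general by writing $U$ as the union of its connected components; you have simply filled in the details (the subdivision near the open left endpoints and the disjointness/pigeonhole observation that yields (c)). Nothing further is needed.
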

\begin{proof}
The thesis is trivial when $U$ is connected and the general case is obtained by writing $U$ as the union
of its connected components.
\end{proof}

\begin{lem}\label{thm:lemagatolegal}
If $B$ is a $G_\delta$ subset of $\left[0,1\right[$, then there exists a weak*-null sequence $(G_n)_{n\ge1}$
in $\NBV\big([0,1]\big)$ such that $B=\big\{u\in[0,1]:G_n(u)\notarrow0\big\}$.
\end{lem}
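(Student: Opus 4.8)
\emph{Proof proposal.} The plan is to realize $B$ by indicator functions of small half-open intervals. First I would write $B=\bigcap_{m\ge1}U_m$ with each $U_m$ open relatively to $\left[0,1\right[$, and then --- and this is the point that makes the argument work --- replace $(U_m)$ by the decreasing sequence of open sets $V_m=\bigcap_{j=1}^{m}U_j$, which still satisfies $\bigcap_{m}V_m=B$ but has the extra feature that a point lying outside $B$ lies outside every $V_m$ from some index on. Applying Lemma~\ref{thm:Usequence} to each $V_m$ (with $\varepsilon=1/m$) yields pairwise disjoint intervals $I_{m,k}=\left[a_{m,k},b_{m,k}\right[$, $k\ge1$, whose union is $V_m$, with $b_{m,k}-a_{m,k}<1/m$ for every $k$ and $b_{m,k}-a_{m,k}\to0$ as $k\to\infty$. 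If some $V_m$, and hence $B$, is empty, I would simply take $G_n\equiv0$.

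Next I would fix an arbitrary bijection $n\mapsto\bigl(m(n),k(n)\bigr)$ of $\mathds N$ onto $\mathds N\times\mathds N$ and put $G_n=\mathds{1}_{I_{m(n),k(n)}}$, regarded as a function on $[0,1]$. Each $G_n$ is right-continuous with $\Vert G_n\Vert_{\BV}\le2$, so $(G_n)_{n\ge1}$ is a bounded sequence in $\NBV([0,1])$; under the isometry of Lemma~\ref{thm:MKNBVK} it corresponds to the sequence of measures $\mu_n=\delta_{a_{m(n),k(n)}}-\delta_{b_{m(n),k(n)}}$, and to say that $(G_n)$ is weak*-null in $\NBV([0,1])$ is to say that $(\mu_n)$ is weak*-null in $M([0,1])$.

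To check the latter I would note that $\int f\,\dd\mu_n=f\bigl(a_{m(n),k(n)}\bigr)-f\bigl(b_{m(n),k(n)}\bigr)$ for $f\in C([0,1])$, so by uniform continuity it is enough that $b_{m(n),k(n)}-a_{m(n),k(n)}\to0$; this follows because for each $\varepsilon>0$ the set of pairs $(m,k)$ with $b_{m,k}-a_{m,k}\ge\varepsilon$ is finite (the inequality forces $m<1/\varepsilon$, and for each of these finitely many values of $m$ only finitely many $k$ qualify since $b_{m,k}-a_{m,k}\to0$ as $k\to\infty$), hence only finitely many indices $n$ can violate the estimate, whatever the bijection. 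To identify the exceptional set, recall that every $G_n$ is $\{0,1\}$-valued, so it suffices to show that $G_n(u)=1$ infinitely often when $u\in B$ and $G_n(u)=0$ for all large $n$ when $u\notin B$. If $u\in B$ then $u\in V_m$ for every $m$, so by disjointness there is a unique $k$ with $u\in I_{m,k}$ and a unique $n$ with $\bigl(m(n),k(n)\bigr)=(m,k)$; these indices are distinct for distinct $m$, giving infinitely many $n$ with $G_n(u)=1$. If $u\notin B$, choose $m_0$ with $u\notin V_{m_0}$; then $u\notin V_m$ for all $m\ge m_0$, so $G_n(u)=0$ whenever $m(n)\ge m_0$, and for each of the finitely many values $m<m_0$ at most one index $n$ with $m(n)=m$ has $G_n(u)=1$; hence $G_n(u)=1$ for only finitely many $n$ and $G_n(u)\to0$. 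Thus $B=\{u\in[0,1]:G_n(u)\notarrow0\}$.

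The one place where an idea is really needed is the replacement of $(U_m)$ by the decreasing family $(V_m)$: without it, a point outside $B$ but inside infinitely many $U_m$ would be hit infinitely often by the scanning intervals. The only other slightly delicate point is that the small-length intervals exhaust each $V_m$, which is exactly what keeps the point-mass-difference measures weak*-null under an arbitrary enumeration of $\mathds N\times\mathds N$; everything else is bookkeeping.
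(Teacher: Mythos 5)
Your proof is correct and follows essentially the same route as the paper: intersecting down to a decreasing sequence of relatively open subsets of $\left[0,1\right[$, covering each by the short half-open intervals from Lemma~\ref{thm:Usequence} with $\varepsilon=1/m$, taking the characteristic functions (equivalently the measures $\delta_{a}-\delta_{b}$) under an enumeration of $\mathds N\times\mathds N$, and getting weak*-nullity from the lengths tending to $0$. The paper simply assumes the open sets decreasing from the start and leaves the pointwise identification of $B$ implicit, which you spell out; no substantive difference.
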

\begin{proof}
Write $B=\bigcap_{j=1}^\infty U_j$, with $(U_j)_{j\ge1}$ a decreasing sequence of sets open in $\left[0,1\right[$.
For each $j\ge1$, apply Lemma~\ref{thm:Usequence} with $U=U_j$ and $\varepsilon=\frac1j$, obtaining a sequence of intervals $\big(\left[a_{jk},b_{jk}\right[\big)_{k\ge1}$. Denote by $F_{jk}\in\NBV\big([0,1]\big)$ the characteristic function of $\left[a_{jk},b_{jk}\right[$, which corresponds through \eqref{eq:muFmu} to the measure $\delta_{a_{jk}}-\delta_{b_{jk}}\in M\big([0,1]\big)$. The desired sequence $(G_n)_{n\ge1}$ is defined by setting
$G_n=F_{j(n)k(n)}$, where $n\mapsto\big(j(n),k(n)\big)$ is an enumeration of all pairs of positive integers.
To see that $(G_n)_{n\ge1}$ is weak*-null, note that $b_{j(n)k(n)}-a_{j(n)k(n)}\longrightarrow0$.
\end{proof}

\goodbreak

\begin{lem}\label{thm:Gdeltaweakstar}
Let $H$ be a separable compact line and $A$ be a $G_\delta$ subset of $H$ consisting of internal points
of $H$. Then there exists a weak*-null sequence $(F_n)_{n\ge1}$ in $\NBV(H)$ such that $A$ is the union
of $\big\{t\in H:F_n(t)\notarrow0\big\}$ with a countable set.
\end{lem}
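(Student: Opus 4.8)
The plan is to transport the problem to the unit interval via the map furnished by Lemma~\ref{thm:lemapsi} and reduce to Lemma~\ref{thm:lemagatolegal}. If $H$ is countable then $A$ is countable and one takes $F_n=0$, so I assume $H$ uncountable and fix a continuous increasing surjection $\psi\colon H\to[0,1]$ whose fibers are all countable. The point of the reduction is this: if $B$ is a $G_\delta$ subset of $[0,1[$ and $(G_n)_{n\ge1}$ is the weak*-null sequence in $\NBV\big([0,1]\big)$ produced in the proof of Lemma~\ref{thm:lemagatolegal} --- so that $G_n=\chi_{[a_n,b_n[}$ with $a_n<b_n$ in $[0,1[$ and $b_n-a_n\to0$ --- then, putting $\alpha_n=\min\psi^{-1}(a_n)$, $\gamma_n=\min\psi^{-1}(b_n)$ and using that $\psi$ is an increasing surjection, one has $\psi^{-1}([a_n,b_n[)=[\alpha_n,\gamma_n[$, so that $F_n:=G_n\circ\psi=\chi_{[\alpha_n,\gamma_n[}$ lies in $\NBV(H)$, corresponds via \eqref{eq:muFmu} to $\delta_{\alpha_n}-\delta_{\gamma_n}$, and satisfies $\{t\in H:F_n(t)\notarrow0\}=\psi^{-1}\big(\{u:G_n(u)\notarrow0\}\big)=\psi^{-1}(B)$. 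Hence it is enough to produce $B$ so that (i) $A$ equals $\psi^{-1}(B)$ up to a countable set and (ii) $(\delta_{\alpha_n}-\delta_{\gamma_n})_{n\ge1}$ is weak*-null.

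To build $B$, I would set $D=\big\{u\in[0,1]:\psi^{-1}(u)\text{ has nonempty interior}\big\}$. The interiors of the fibers over points of $D$ are pairwise disjoint nonempty open subsets of $H$, so $D$ is countable by separability of $H$, and hence $\psi^{-1}(D)$ is countable. The crucial observation is that whenever $t\in A$ and $\psi^{-1}(\psi(t))$ is not a singleton, the interval $\psi^{-1}(\psi(t))$ has nonempty interior: this is clear when $t$ lies strictly between its endpoints, and when $t$ is an endpoint it follows from internality of $t$ (a one-sided limit point). Therefore $A\cap\psi^{-1}(D)$ is countable and $A':=A\setminus\psi^{-1}(D)$ consists precisely of the points $t\in A$ with $\psi^{-1}(\psi(t))=\{t\}$. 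As $\psi^{-1}(D)$ is countable, $A'$ is a $G_\delta$ subset of $H$; writing $A'=\bigcap_n O_n$ with $O_n$ open in $H$ and setting $B=\psi(A')$, one checks $\psi^{-1}(B)=A'$, so $A=A'\cup\big(A\cap\psi^{-1}(D)\big)$ is $\psi^{-1}(B)$ together with a countable set, establishing (i). Furthermore $\psi^{-1}(u)\subseteq A'$ precisely when $u\in B$, which gives $B=\bigcap_n\big([0,1]\setminus\psi[H\setminus O_n]\big)$; since each $\psi[H\setminus O_n]$ is compact, $B$ is $G_\delta$ in $[0,1]$, and $1\notin B$ because $\psi^{-1}(1)\ni\max H$ while $\max H$ is external and so not in $A$. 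Thus $B$ is a $G_\delta$ subset of $[0,1[$, Lemma~\ref{thm:lemagatolegal} applies, and $\{t:F_n(t)\notarrow0\}=\psi^{-1}(B)=A'$.

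The remaining step, (ii), is the main obstacle: I must show $g(\alpha_n)-g(\gamma_n)\to0$ for every $g\in C(H)$. Arguing by contradiction, pass to a subsequence so that $\alpha_n\to\alpha^*$ and $\gamma_n\to\gamma^*$ (compactness of $H$) with $\alpha^*\ne\gamma^*$; since $b_n-a_n\to0$ and $\psi$ is continuous, $\psi(\alpha^*)=\psi(\gamma^*)=:u^*$, so $\psi^{-1}(u^*)$ is not a singleton and $u^*\notin B$. From the proof of Lemma~\ref{thm:lemagatolegal} the intervals $[a_n,b_n[$ split into countably many families, the $j$-th consisting of pairwise disjoint subintervals of the connected components of a set $U_j$, where $(U_j)_{j\ge1}$ decreases with $\bigcap_j U_j=B$; fix $j_0$ with $u^*\notin U_j$ for all $j\ge j_0$. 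One of the following occurs for infinitely many $n$: (a) $b_n\le u^*$, (b) $a_n>u^*$, (c) $a_n\le u^*<b_n$. In case (a), $a_n,b_n\uparrow u^*$, and the left-continuity of the increasing function $u\mapsto\min\psi^{-1}(u)$ (which holds because $\psi$ is a continuous increasing surjection) forces $\alpha^*=\gamma^*=\min\psi^{-1}(u^*)$. In case (b), $a_n,b_n\downarrow u^*$, and monotonicity of $u\mapsto\min\psi^{-1}(u)$ forces $\alpha^*=\gamma^*=\inf\{\min\psi^{-1}(u):u>u^*\}$. In case (c), $u^*$ lies in a connected component of $U_{j(n)}$, so $u^*\in U_{j(n)}$ and $j(n)<j_0$; but $u^*$ meets at most one interval of each family, so only finitely many such $n$ exist --- a contradiction. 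Every case contradicts $\alpha^*\ne\gamma^*$ (or the choice of an infinite subsequence), so $(F_n)_{n\ge1}$ is weak*-null and (ii) holds. The points requiring genuine care are the left-continuity of $u\mapsto\min\psi^{-1}(u)$ and the bookkeeping with the families $U_j$ in case (c).
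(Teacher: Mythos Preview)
Your proof is correct and follows the same overall strategy as the paper: transport to $[0,1]$ via the map $\psi$ of Lemma~\ref{thm:lemapsi}, strip off a countable exceptional set to obtain a $G_\delta$ set $B\subset\left[0,1\right[$ disjoint from $Q(\psi)$, invoke Lemma~\ref{thm:lemagatolegal}, and pull back by setting $F_n=G_n\circ\psi$. Your set $D$ is essentially the paper's $E=\{u:\vert\psi^{-1}(u)\vert>2\}$; they differ at most on isolated points of $H$, which cannot lie in $A$ anyway.

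The genuine divergence is in how weak*-nullity of $(F_n)$ is established. The paper appeals to Corollary~\ref{thm:corweakstar}: one checks that $\psi_*(\mu_n)=\nu_n$ is weak*-null (immediate from Lemma~\ref{thm:lemagatolegal}) and computes $(R_u)_*(\mu_n)=G_n(u)\delta_{a_u}+\big(G_n(1)-G_n(u)\big)\delta_{b_u}$, which is norm-null for every $u\in Q(\psi)$ since such $u\notin B$. This uses only the \emph{statement} of Lemma~\ref{thm:lemagatolegal}. Your argument instead opens up the \emph{proof} of that lemma, exploiting the explicit form $G_n=\chi_{[a_n,b_n[}$ together with the disjoint-family structure of the intervals inside each $U_j$, and then runs a direct sequential-compactness and trichotomy argument on the endpoints. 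Both routes are valid; the paper's is cleaner in that it treats Lemma~\ref{thm:lemagatolegal} as a black box and reuses the structural Corollary~\ref{thm:corweakstar}, while yours is more hands-on but pays for it by depending on implementation details of the earlier construction. One small remark: in case~(b) your phrase ``monotonicity forces'' really relies on the fact that an increasing map into a compact line has one-sided limits, so that $\alpha_n$ and $\gamma_n$ both converge to the right limit $\inf_{v>u^*}\min\psi^{-1}(v)$ (which equals $\max\psi^{-1}(u^*)$); this is true but deserves to be said.
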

\begin{proof}
If $H$ is countable, take $F_n=0$, for all $n$. If $H$ is uncountable, pick $\psi:H\to[0,1]$ as in the statement of
Lemma~\ref{thm:lemapsi}. For $u\in[0,1]$, write $\psi^{-1}(u)=[a_u,b_u]$. The set:
\[E=\big\{u\in[0,1]:\vert\psi^{-1}(u)\vert>2\big\}\]
is countable, since $H$ is separable and the open intervals $\left]a_u,b_u\right[$, $u\in E$, are nonempty
and pairwise disjoint. The set $\psi^{-1}[E]$ is also countable. Since $A$ consists of internal points of $H$, we have:
\[A\subset\psi^{-1}\big[\left]0,1\right[\setminus Q(\psi)\big]\cup\psi^{-1}[E].\]
Then $A\setminus\psi^{-1}[E]=\psi^{-1}[B]$, for some $B\subset\left]0,1\right[\setminus Q(\psi)$.
Since $A\setminus\psi^{-1}[E]$ is a $G_\delta$ subset of $H$ and the map $\psi$ is closed and surjective, it follows that $B$ is a $G_\delta$ subset of $[0,1]$. Let $(G_n)_{n\ge1}$ be as in the statement of Lemma~\ref{thm:lemagatolegal}.
Set $F_n=G_n\circ\psi$, so that $F_n\in\NBV(H)$ and $\Vert F_n\Vert_{\BV}=\Vert G_n\Vert_{\BV}$.
To conclude the proof, it remains to show that $(F_n)_{n\ge1}$ is weak*-null. To this aim, we use Corollary~\ref{thm:corweakstar}. Denote by $\mu_n\in M(H)$ and $\nu_n\in M\big([0,1]\big)$ the measures
corresponding to $F_n$ and $G_n$, respectively, through \eqref{eq:muFmu}. We have that $\psi_*(\mu_n)=\nu_n$,
since $G_n(u)=F_n(b_u)$, for all $u\in[0,1]$. Thus $\big(\psi_*(\mu_n)\big)_{n\ge1}$ is weak*-null.
Finally, denoting by $R_u:H\to[a_u,b_u]$ the canonical retraction, we compute:
\[(R_u)_*(\mu_n)=G_n(u)\delta_{a_u}+\big(G_n(1)-G_n(u)\big)\delta_{b_u}\in M\big([a_u,b_u]\big).\]
For $u\in Q(\psi)$, we have that $u\not\in B$ and therefore $G_n(u)\longrightarrow0$. Hence
the sequence $\big((R_u)_*(\mu_n)\big)_{n\ge1}$ is norm-convergent to zero.
\end{proof}

By a $G_{\delta\sigma}$ (resp., $F_{\sigma\delta}$) subset of a topological space we mean a subset that
is a countable union (resp., countable intersection) of $G_\delta$ (resp., $F_\sigma$) subsets.
\begin{lem}\label{thm:Gdeltasigma}
Let $\mathfrak X$ be a topological space and $(F_n)_{n\ge1}$ be a sequence of maps $F_n:\mathfrak X\to\R$
having at most a countable number of discontinuity points. Then the set $\big\{p\in\mathfrak X:F_n(p)\notarrow0\big\}$
is the union of a $G_{\delta\sigma}$ subset of $\mathfrak X$ with a countable set.
\end{lem}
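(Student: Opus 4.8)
The plan is to unwind ``$F_n(p)$ does not converge to $0$'' in the usual $\limsup$-form and then push the countably many discontinuity points through the Boolean operations. Put $D_n=\big\{p\in\mathfrak X:F_n\text{ is discontinuous at }p\big\}$, which is countable by hypothesis, and let $D=\bigcup_{n\ge1}D_n$. For $n,k\ge1$ set $S_{n,k}=\big\{p\in\mathfrak X:\vert F_n(p)\vert>\frac1k\big\}$, so that
\[\big\{p\in\mathfrak X:F_n(p)\notarrow0\big\}=\bigcup_{k\ge1}\bigcap_{N\ge1}\bigcup_{n\ge N}S_{n,k}.\]
The goal is to show that each inner piece, up to an error contained in the fixed countable set $D$, is a $G_\delta$ set, and then take the countable union over $k$.

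First I would observe that $S_{n,k}$ is ``almost open'': if $p\in S_{n,k}$ and $F_n$ is continuous at $p$, then $\vert F_n\vert>\frac1k$ on a neighbourhood of $p$, so $p$ lies in the interior $O_{n,k}$ of $S_{n,k}$; hence $S_{n,k}=O_{n,k}\cup C_{n,k}$ with $C_{n,k}:=S_{n,k}\setminus O_{n,k}\subseteq D_n$. Taking the union over $n\ge N$ gives $\bigcup_{n\ge N}S_{n,k}=V_{N,k}\cup C'_{N,k}$, where $V_{N,k}:=\bigcup_{n\ge N}O_{n,k}$ is open and $C'_{N,k}:=\bigcup_{n\ge N}C_{n,k}\subseteq D$. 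Now the key point: intersecting over $N$, I claim that $\bigcap_{N\ge1}\bigcup_{n\ge N}S_{n,k}$ coincides with the $G_\delta$ set $W_k:=\bigcap_{N\ge1}V_{N,k}$ up to a subset of $D$. The inclusion $W_k\subseteq\bigcap_{N\ge1}(V_{N,k}\cup C'_{N,k})$ is clear, and conversely a point of $\bigcap_{N\ge1}(V_{N,k}\cup C'_{N,k})$ lying outside $D$ is outside every $C'_{N,k}$, hence lies in every $V_{N,k}$, hence in $W_k$. Thus $\bigcap_{N\ge1}\bigcup_{n\ge N}S_{n,k}=W_k\cup E_k$ with $E_k\subseteq D$ countable, and taking the union over $k$ yields
\[\big\{p\in\mathfrak X:F_n(p)\notarrow0\big\}=\Big(\bigcup_{k\ge1}W_k\Big)\cup\Big(\bigcup_{k\ge1}E_k\Big),\]
where $\bigcup_{k\ge1}W_k$ is $G_{\delta\sigma}$ (a countable union of $G_\delta$ sets) and $\bigcup_{k\ge1}E_k\subseteq D$ is countable, which is exactly the assertion.

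I expect the only delicate point to be the third step. It is tempting to ``extract the countable errors'' one operation at a time, but a countable intersection of sets of the form (open)$\,\cup\,$(countable) need not again be of that form for purely set-theoretic reasons; what saves the argument is that all the error sets $C'_{N,k}$ live inside the single countable set $D$, so that deleting $D$ turns the sets $V_{N,k}\cup C'_{N,k}$ back into the open sets $V_{N,k}$ simultaneously for all $N$. Everything else — rewriting non-convergence as a $\limsup$, the fact that unions of open sets are open, and the observation that the interior of $S_{n,k}$ absorbs all points of continuity of $F_n$ contained in $S_{n,k}$ — is routine.
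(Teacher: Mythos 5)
Your proof is correct and takes essentially the same route as the paper: rewrite non-convergence as $\bigcup_{k}\bigcap_{N}\bigcup_{n\ge N}\big\{p:\vert F_n(p)\vert>\tfrac1k\big\}$ and observe that each of these sets is its interior together with some discontinuity points, so that all errors lie in the single countable set of discontinuity points. The paper states this last absorption step only implicitly ("note that the set is the union of its interior with some discontinuity points"), whereas you spell it out; no further comment is needed.
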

\begin{proof}
We have:
\[\big\{p\in\mathfrak X:F_n(p)\notarrow0\big\}=\bigcup_{m=1}^\infty\bigcap_{n=1}^\infty\bigcup_{k=n}^\infty
\big\{p\in\mathfrak X:\vert F_k(p)\vert>\tfrac1m\big\}.\]
To conclude the proof, note that the set $\big\{p\in\mathfrak X:\vert F_k(p)\vert>\frac1m\big\}$
is the union of its interior with some discontinuity points of $F_k$.
\end{proof}

\goodbreak

\begin{lem}\label{thm:zeroforadoH}
Let $(\mu_n)_{n\ge1}$ be a weak*-null sequence in $M(L)$ and let $H$ be a closed subset of $L$ containing
$\supp\mu_n$, for all $n\ge1$.
If $F_n\in\NBV(L)$ corresponds to $\mu_n$ through \eqref{eq:muFmu}, then $F_n(t)\longrightarrow0$, for all $t\in L\setminus H$.
\end{lem}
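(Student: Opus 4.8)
The plan is to fix $t\in L\setminus H$ and exhibit a function $f\in C(L)$ for which $\int_L f\,\dd\mu_n$ equals $F_n(t)$ (or differs from it by a quantity tending to $0$); then weak*-nullity of $(\mu_n)_{n\ge1}$ gives the conclusion. Since $H$ is closed and $t\notin H$, there is an open interval around $t$ in $L$ disjoint from $H$; we must be slightly careful because $t$ could fail to be internal. The key observation is that $F_n(t)=\mu_n\big([0,t]\big)$, so it suffices to find a continuous function that agrees with the characteristic function of $[0,t]$ on all of $\supp\mu_n\subset H$. Concretely, I would split into cases according to whether $t$ is a left limit point of $H$ and whether $t$ is a right limit point of $H$.

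\textbf{Main case.} If there exist $a,b\in L$ with $a<t<b$ and $\left]a,b\right[\cap H=\emptyset$, choose $f\in C(L)$ with $f\equiv1$ on $[0,a]$ and $f\equiv0$ on $[b,\max L]$ and $f$ arbitrary (say continuous increasing-type interpolation is not even needed) on $\left]a,b\right[$. For every point $s\in H$ we then have: either $s\le a$, whence $s\in[0,t]$ and $f(s)=1=\chi_{[0,t]}(s)$; or $s\ge b$, whence $s\notin[0,t]$ and $f(s)=0=\chi_{[0,t]}(s)$. Thus $f$ and $\chi_{[0,t]}$ coincide $\mu_n$-almost everywhere, because $|\mu_n|$ is concentrated on $H$, and hence $\int_L f\,\dd\mu_n=\mu_n\big([0,t]\big)=F_n(t)$. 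Since $f$ is a fixed element of $C(L)$ and $\mu_n\xrightarrow{\;\text{w*}\;}0$, we get $F_n(t)\longrightarrow0$.

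\textbf{Boundary cases.} It remains to handle the possibility that no such interval exists, i.e.\ that $t$ is a left limit point of $H$ or a right limit point of $H$ (or both). Suppose $t$ is a right limit point of $H$: then $t=\min\big(H\cap[t,\max L]\big)$ has points of $H$ immediately to its right but, by $t\notin H$ and closedness of $H$, there is still some $a<t$ with $\left]a,t\right[\cap H=\emptyset$; now take $f\in C(L)$ with $f\equiv1$ on $[0,a]$ and $f\equiv0$ on $[t,\max L]$. For $s\in H$: if $s\le a$ then $f(s)=1=\chi_{[0,t]}(s)$; if $s\ge t$ then, since $t\notin H$, in fact $s>t$, so $f(s)=0=\chi_{[0,t]}(s)$. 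Again $f=\chi_{[0,t]}$ $\mu_n$-a.e., and we conclude as before. The symmetric argument (using an interval $\left]t,b\right[$ disjoint from $H$ and $f\equiv1$ on $[0,t]$, $f\equiv0$ on $[b,\max L]$) covers the case when $t$ is a left limit point of $H$; note $t\notin H$ forces $[0,t]\cap H=[0,t\big[\cap H$ so the values still match on $H$. Since $H$ is closed and $t\notin H$, at least one of these three situations always applies. The only mildly delicate point is choosing the separating endpoints $a,b$ using nothing more than closedness of $H$ and the order topology on $L$; no obstacle is expected, so this is essentially a routine verification once the cases are laid out.
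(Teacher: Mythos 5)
Your argument is correct, and it reaches the conclusion by a more hands-on route than the paper's. You fix $t\in L\setminus H$ and use normality of $L$ (Urysohn) to produce a single test function $f\in C(L)$ that coincides with $\chi_{[0,t]}$ on $H$; since $\vert\mu_n\vert$ is concentrated on $H\supset\supp\mu_n$, this gives $F_n(t)=\mu_n\big([0,t]\big)=\int_L f\,\dd\mu_n\longrightarrow0$ directly from weak*-nullity in $M(L)$. The paper instead sets $s=\max\big([0,t]\cap H\big)$, observes that $[0,s]\cap H$ is clopen in $H$ because $s$ is right isolated in $H$, and applies weak*-nullity of the restricted sequence $(\mu_n\vert_H)_{n\ge1}$ in $M(H)$. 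Both arguments exploit the same underlying fact, namely that $\chi_{[0,t]}$ restricted to $H$ is continuous; your version stays inside $C(L)$ and $M(L)$ at the cost of an explicit case analysis, while the paper's transfer to $M(H)$ makes the write-up shorter and avoids cases.

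One slip is worth fixing, although it does not invalidate the proof: your description of the boundary cases is off. Since $H$ is closed and $t\notin H$, the point $t$ can never be a left or right limit point of $H$ (that would force $t\in\overline H=H$), so the cases ``suppose $t$ is a right (resp.\ left) limit point of $H$'' are vacuous as stated; in particular the assertion $t=\min\big(H\cap[t,\max L]\big)$ contradicts $t\notin H$. The situations genuinely not covered by your main case are $t=\min L$ and $t=\max L$, where one of the two separating endpoints $a<t<b$ fails to exist. Fortunately, the constructions you actually carry out do cover them: whenever $t>\min L$ there exists $a<t$ with $\left]a,t\right[\cap H=\emptyset$ and your second construction (separating $[0,a]$ from $[t,\max L]$) applies, which handles $t=\max L$; whenever $t<\max L$ your third construction applies, which handles $t=\min L$; and the degenerate situation $H=\emptyset$ is trivial since then $\mu_n=0$. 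So the proof stands once the case split is rephrased in terms of the endpoints of $L$ rather than limit-point behavior of $H$.
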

\begin{proof}
If $[0,t]\cap H=\emptyset$, then $F_n(t)=0$, for all $n\ge1$. Otherwise, let $s$ be the maximum of $[0,t]\cap H$.
Then $F_n(t)=\mu_n\big([0,s]\cap H\big)$, for all $n$. Note that $s$ is right isolated in $H$ and therefore
$[0,s]\cap H$ is clopen in $H$. Moreover, $(\mu_n\vert_H)_{n\ge1}$ is weak*-null in $M(H)$ and hence
$\mu_n\big([0,s]\cap H\big)\longrightarrow0$.
\end{proof}

\begin{lem}\label{thm:CA}
Let $H$ be a separable compact line and $C$ be a subset of $H$ consisting of internal points of $H$. Then there exists
a subset $A$ of $C$ such that $C\setminus A$ is countable and every point of $A$ is a two-sided limit point
of $A$ (relatively to $H$).
\end{lem}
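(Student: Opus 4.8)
The plan is to take for $A$ the set obtained from $C$ by discarding the points that are, so to speak, ``one-sidedly countable in $C$''. Concretely, let
\[B_r=\big\{t\in C:\text{there is }y\in H\text{ with }y>t\text{ and }\left]t,y\right[\cap C\text{ countable}\big\},\]
let $B_l$ be defined symmetrically using left neighbourhoods, and set $A=C\setminus(B_r\cup B_l)$. I will show that $B_r$ and $B_l$ are countable, which is the crux of the matter; granting this, $C\setminus A=B_r\cup B_l$ is countable, and it remains only to check that every $t\in A$ is a two-sided limit point of $A$. But if $t\in A$ then $t$ is an internal point of $H$, so $t$ is neither the maximum nor the minimum of $H$; and since $t\notin B_r$, for every $y>t$ the set $\left]t,y\right[\cap C$ is uncountable, so $\left]t,y\right[\cap A=\big(\left]t,y\right[\cap C\big)\setminus(B_r\cup B_l)$ is uncountable and in particular nonempty. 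Hence $t$ is a right limit point of $A$, and symmetrically (using $t\notin B_l$) a left limit point of $A$, as required.

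It remains to prove that $B_r$ is countable; the argument for $B_l$ is identical. For each $t\in B_r$ fix $y_t\in H$ with $y_t>t$ and $\left]t,y_t\right[\cap C$ countable, and let $O_r=\bigcup_{t\in B_r}\left]t,y_t\right[$. Since $O_r$ is open in the separable compact line $H$, it is Lindel\"of by Remark~\ref{thm:likemetrizable}, so the cover $\{\left]t,y_t\right[:t\in B_r\}$ of $O_r$ admits a countable subcover; as each interval in this subcover meets $C$ in a countable set, the set $O_r\cap C$ is countable. On the other hand, put $T_r=B_r\setminus O_r$. If $t_1,t_2\in T_r$ with $t_1<t_2$, then $y_{t_1}\le t_2$, for otherwise $t_2\in\left]t_1,y_{t_1}\right[\subseteq O_r$, contrary to $t_2\notin O_r$; it follows that the intervals $\left]t,y_t\right[$, $t\in T_r$, are pairwise disjoint. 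Each of them is nonempty, since a point $t\in T_r\subseteq C$ is internal in $H$ and hence a right limit point of $H$, so that $\left]t,y_t\right[\neq\emptyset$. As $H$ is separable it satisfies the countable chain condition, so the family $\{\left]t,y_t\right[:t\in T_r\}$, which is indexed injectively by $T_r$ thanks to the disjointness, is countable; hence $T_r$ is countable. Consequently $B_r\subseteq(O_r\cap C)\cup T_r$ is countable, and likewise $B_l$, which completes the proof.

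I expect the one delicate point to be the treatment of $T_r$: the interval $\left]t,y_t\right[$ lies in $O_r$, but the point $t$ itself need not, so the ``countable subcover'' step alone does not account for the left endpoints; the disjointness observation for the intervals indexed by $T_r$ is precisely what is needed to absorb these boundary points via the countable chain condition. Everything else reduces to the hereditary Lindel\"of property of separable compact lines recorded in Remark~\ref{thm:likemetrizable} and to the elementary fact that separability implies the countable chain condition.
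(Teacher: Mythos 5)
Your proof is correct and follows essentially the same route as the paper: your $A=C\setminus(B_r\cup B_l)$ is exactly the set of two-sided condensation points of $C$, and your countability argument for $B_r$ (splitting into $O_r\cap C$, handled by the hereditary Lindel\"of property, and the disjoint-interval part $T_r$, handled by separability/ccc) is the same decomposition the paper uses with its set $W$. No gaps; the point you flag about the left endpoints outside $O_r$ is precisely the paper's $S_+\setminus W$ step.
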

\begin{proof}
Let $A$ denote the set of points of $C$ that are two-sided condensation points of $C$. The conclusion
will follow if we show that $C\setminus A$ is countable. We have $C\setminus A=S_+\cup S_-$, where
$S_+$ (resp., $S_-$) denotes the set of points of $C$ that are not right condensation (resp., left condensation)
points of $C$. Let us prove that $S_+$ is countable. For $t\in S_+$, let $t'>t$ be such that $C\cap\left]t,\smash{t'}\right[$
is countable. Note that $\left]t,\smash{t'}\right[\ne\emptyset$, since $t$ is an internal point of $H$.
Setting $W=\bigcup_{t\in S_+}\left]t,\smash{t'}\right[$, it is easily seen that the open intervals
$\left]t,\smash{t'}\right[$, with $t\in S_+\setminus W$, are pairwise disjoint. Hence, by the separability of $H$,
the set $S_+\setminus W$ is countable.
Finally, the fact that $H$ is hereditarily Lindel\"of (Remark~\ref{thm:likemetrizable}) implies that $C\cap W$
(and then also $S_+\cap W$) is countable.
\end{proof}

\begin{proof}[Proof of Theorem~\ref{thm:main}]
Assume that $\phi$ has the \EP\ and let $A$ be a separable $G_\delta$ subset of $L$ such that every point
of $A$ is a two-sided limit point of $A$. Let $H$ denote the closure of $A$, so that $H$ is a separable
compact line and $A$ is a $G_\delta$ subset of $H$ consisting of internal points of $H$. Pick a sequence $(F_n)_{n\ge1}$ in $\NBV(H)$ as in Lemma~\ref{thm:Gdeltaweakstar}. Let $\mu_n\in M(H)$ correspond to
$F_n$ through \eqref{eq:muFmu} and let $\bar\mu_n\in M(L)$ be the extension of $\mu_n$ that vanishes identically
outside of $H$. Then $(\bar\mu_n)_{n\ge1}$ is weak*-null in $M(L)$ and the function $\overline F_n\in\NBV(L)$
that corresponds to $\bar\mu_n$ through \eqref{eq:muFmu} is an extension of $F_n$. Since $\phi$ has the \EP, using Proposition~\ref{thm:extcriteria}
with the sequence $(\overline F_n)_{n\ge1}$, we obtain that
the set $\big\{t\in Q(\phi):\overline F_n(t)\notarrow0\big\}$ is countable. Hence $A\cap Q(\phi)$ is countable.
Conversely, let $T:C(L)\to c_0$ be a bounded operator associated with a weak*-null sequence $(\mu_n)_{n\ge1}$
in $M(L)$ and let $F_n\in\NBV(L)$ correspond to $\mu_n$ through \eqref{eq:muFmu}. By Proposition~\ref{thm:extcriteria},
in order to conclude the proof of the theorem, we need to show that the set
$\big\{t\in Q(\phi):F_n(t)\notarrow0\big\}$ is countable. A bounded variation function has only
a countable number of discontinuity points and thus, by Lemma~\ref{thm:Gdeltasigma}, we can write
$\big\{t\in L:F_n(t)\notarrow0\big\}$ as the union of $\bigcup_{m=1}^\infty C_m$ with a countable set,
where each $C_m$ is a $G_\delta$ subset of $L$. It remains to show that $C_m\cap Q(\phi)$ is countable,
for all $m\ge1$. It follows from \cite[Lemma~2.1]{KK} that $\supp\mu_n$ is separable, for all $n$, and hence
the closure $H$ of $\bigcup_{n=1}^\infty\supp\mu_n$ is a separable compact line. By Lemma~\ref{thm:zeroforadoH},
each $C_m$ is contained in $H$. Note that the sequence
$(\mu_n\vert_H)_{n\ge1}$ is weak*-null in $M(H)$ and that $F_n\vert_H$ corresponds to $\mu_n\vert_H$
through \eqref{eq:muFmu}; thus, by Lemma~\ref{thm:externalpoints}, each $C_m$ contains at most a countable
number of external points of $H$. From Lemma~\ref{thm:CA} we obtain a subset $A_m$ of $C_m$ such that
$C_m\setminus A_m$ is countable and every point of $A_m$ is a two-sided limit point of $A_m$.
Then $A_m$ is a $G_\delta$ subset of $L$ and it is separable, by Lemma~\ref{thm:hereditseparable}.
Hence our assumptions imply that $A_m\cap Q(\phi)$ is countable. This concludes the proof.
\end{proof}

\end{section}

\begin{section}{The separable case}
\label{sec:separable}

In the case when the compact line $L$ is separable, the condition presented in Theorem~\ref{thm:main} can be
simplified.
\begin{teo}\label{thm:mainseparavel}
Let $K$ and $L$ be compact lines, with $L$ separable. A continuous increasing surjection $\phi:K\to L$ has the \EP\ if and only if the following condition holds: for every $G_\delta$ subset $C$ of $L$, if every point of $C$ is an internal point of $L$, then $C\cap Q(\phi)=C\cap Q_0(\phi)$ is countable.
\end{teo}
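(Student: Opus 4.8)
The plan is to derive Theorem~\ref{thm:mainseparavel} from Theorem~\ref{thm:main}. The first observation is that, since $L$ is separable, Lemma~\ref{thm:hereditseparable} guarantees that every subset of $L$ is separable; hence the word ``separable'' may be suppressed in the statement of Theorem~\ref{thm:main}. A second, routine observation is that in both conditions the asserted equality is automatic: if every point of a set $A$ is a two-sided limit point of $A$ relatively to $L$, or if every point of a set $C$ is an internal point of $L$, then the members of $Q(\phi)$ lying in such a set are internal points of $L$, hence belong to $Q_0(\phi)$. Therefore it suffices to prove the equivalence of the following two statements:
\begin{itemize}
\item[(I)] for every $G_\delta$ subset $A$ of $L$ all of whose points are two-sided limit points of $A$ (relatively to $L$), the set $A\cap Q(\phi)$ is countable;
\item[(II)] for every $G_\delta$ subset $C$ of $L$ all of whose points are internal points of $L$, the set $C\cap Q(\phi)$ is countable.
\end{itemize}
Once this is done, Theorem~\ref{thm:main} (in the form with ``separable'' removed) tells us that (I) is equivalent to $\phi$ having the \EP, and so is (II).

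The implication (II)$\Rightarrow$(I) should be immediate: any point which is a two-sided limit point of $A$ relatively to $L$ is, by taking $A=L$ in the definition, a two-sided limit point of $L$, that is, an internal point of $L$; hence a set $A$ as in (I) is a legitimate choice of $C$ in (II).

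For the converse (I)$\Rightarrow$(II), I would take a $G_\delta$ subset $C$ of $L$ consisting of internal points of $L$ and apply Lemma~\ref{thm:CA} with $H=L$. This yields a subset $A\subseteq C$ such that $D:=C\setminus A$ is countable and every point of $A$ is a two-sided limit point of $A$ relatively to $L$. The one point that genuinely needs checking is that $A$ is again a $G_\delta$ subset of $L$; this follows by writing $A=C\cap(L\setminus D)$ and noting that $L\setminus D=\bigcap_{d\in D}(L\setminus\{d\})$ is a countable intersection of open sets, using that singletons are closed in the Hausdorff space $L$, hence $G_\delta$. Then (I) applied to $A$ gives that $A\cap Q(\phi)$ is countable, and since $C\cap Q(\phi)\subseteq(A\cap Q(\phi))\cup D$, the set $C\cap Q(\phi)$ is countable as well, which is (II).

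I do not expect a serious obstacle here: the substantive work is entirely absorbed into Lemma~\ref{thm:CA}, and the only extra ingredient is the elementary fact that deleting a countable subset from a $G_\delta$ set of a Hausdorff space leaves a $G_\delta$ set.
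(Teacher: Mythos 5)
Your argument is correct and is essentially the paper's own proof, which simply cites Theorem~\ref{thm:main} together with Lemmas~\ref{thm:hereditseparable} and \ref{thm:CA}; you have merely spelled out the routine details (hereditary separability making the separability hypothesis automatic, two-sided limit points of $A$ being internal points of $L$, and the fact that removing the countable set $C\setminus A$ from the $G_\delta$ set $C$ leaves a $G_\delta$ set). No gaps.
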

\begin{proof}
Follows from Theorem~\ref{thm:main} and Lemmas~\ref{thm:hereditseparable} and \ref{thm:CA}.
\end{proof}

\begin{defin}
A continuous increasing surjection $\phi:K\to L$ is said to be of {\em countable type\/} if the set $Q_0(\phi)$ is countable.
\end{defin}
Clearly, if $\phi$ is of countable type, then it has the \EP. In Example~\ref{thm:exaBernstein} below we will see that the converse does not hold, even when $L$ is separable. First, we need some terminology.

Given a subset $X$ of $[0,1]$, we denote by $\DA(X)$ the set:
\[\DA(X)=\big([0,1]\times\{0\}\big)\cup\big(X\times\{1\}\big)\subset[0,1]\times\{0,1\}\]
endowed with the lexicographic order. Then $\DA(X)$ is a separable compact line whose
set of internal points is $\big(\left]0,1\right[\setminus X\big)\times\{0\}$.
The first projection $\pi_1:\DA(X)\to[0,1]$ is a continuous increasing surjection. More generally, to each inclusion
$Y\subset X\subset[0,1]$, there corresponds a continuous increasing surjection $\phi:\DA(X)\to\DA(Y)$
defined by $\phi(u,0)=(u,0)$, for $u\in[0,1]\setminus X$, $\phi(u,i)=(u,0)$, for $u\in X\setminus Y$, $i=0,1$,
and $\phi(u,i)=(u,i)$, for $u\in Y$, $i=0,1$. We have $Q(\phi)=(X\setminus Y)\times\{0\}$ and
$Q_0(\phi)=Q(\phi)\setminus\big\{(0,0),(1,0)\big\}$.

\begin{example}\label{thm:exaBernstein}
Let $X$ be a subset of $[0,1]$ such that $[0,1]\setminus X$ is uncountable, but $[0,1]\setminus X$ does not
contain any uncountable closed set (see the construction that appears in \cite[Example~8.24]{Kechris}). Then $[0,1]\setminus X$ does not
contain any uncountable Borel set (see \cite[Theorem~13.6]{Kechris}). Set $K=\DA\big([0,1]\big)$,
$L=\DA(X)$ and let $\phi:K\to L$ be the continuous increasing surjection corresponding to the inclusion
$X\subset[0,1]$. Then $\phi$ is not of countable type. Using Theorem~\ref{thm:mainseparavel}, we show that $\phi$ has the
\EP. Let $C$ be a $G_\delta$ subset of $\DA(X)$ consisting of internal points of $\DA(X)$, i.e.,
$C\subset\big(\left]0,1\right[\setminus X\big)\times\{0\}$. Then $C=\pi_1^{-1}[B]$, with $B\subset\left]0,1\right[\setminus X$ and, since $\pi_1$ is a surjective closed map, it follows that $B$ is a $G_\delta$ subset of $[0,1]$. In particular, $B$ is a Borel set and hence $C=B\times\{0\}$ is countable.
\end{example}

\goodbreak

The set $X$ used in Example~\ref{thm:exaBernstein} to construct the compact line $L$ must be quite strange: it cannot be a Borel set and, under the assumption of existence of certain large cardinals, it cannot even belong to the projective hierarchy (see \cite[Theorem~38.17]{Kechris} and \cite{PD}). It is natural to ask whether, for
a ``sufficiently regular'' separable compact line $L$, every continuous increasing surjection $\phi:K\to L$
with the \EP\ is of countable type. We consider the following notion of regularity for separable compact lines.
\begin{defin}\label{thm:defBorelregular}
A separable compact line $L$ is said to be {\em Borel regular\/} if its set of internal points is a Borel subset of $L$.
\end{defin}

Definition~\ref{thm:defBorelregular} is motivated by the following result.
\begin{prop}\label{thm:Borelregular}
If $X$ is a subset of $[0,1]$, then $\DA(X)$ is Borel regular if and only if $X$ is a Borel set.
\end{prop}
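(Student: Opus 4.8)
The plan is to prove both implications by relating the internal points of $\DA(X)$ to the set $X$ through the first projection $\pi_1:\DA(X)\to[0,1]$. Recall that, as noted in the paragraph preceding Example~\ref{thm:exaBernstein}, the set of internal points of $\DA(X)$ is exactly $I=\big(\left]0,1\right[\setminus X\big)\times\{0\}$. So the task is to show that $I$ is a Borel subset of $\DA(X)$ if and only if $X$ is a Borel subset of $[0,1]$.

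For the easier direction, suppose $X$ is Borel. Then $\left]0,1\right[\setminus X$ is Borel in $[0,1]$, and since $\pi_1:\DA(X)\to[0,1]$ is a continuous map, the preimage $\pi_1^{-1}\big[\left]0,1\right[\setminus X\big]$ is Borel in $\DA(X)$. Now $\pi_1^{-1}\big[\left]0,1\right[\setminus X\big] = \big(\left]0,1\right[\setminus X\big)\times\{0\}$, because for $u\notin X$ the only point of $\DA(X)$ over $u$ is $(u,0)$; thus this preimage is precisely $I$, and $I$ is Borel. For the converse, suppose $I$ is a Borel subset of $\DA(X)$. I would like to recover $X$ as the image or preimage of $I$ under a nice map, but images of Borel sets under continuous maps need not be Borel, so instead I will exhibit $\left]0,1\right[\setminus X$ as $\pi_1[I]$ and argue that $\pi_1$ restricted to the closed set $\overline{I}$ (or to a suitable Borel set containing $I$) is injective enough. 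Concretely, $\pi_1$ is injective on $\big([0,1]\times\{0\}\big)\cap\DA(X)$, which is a Borel (indeed closed) subset of $\DA(X)$ containing $I$; the restriction of $\pi_1$ to this set is a continuous bijection onto $[0,1]$, hence a homeomorphism since both spaces are compact metrizable, so it carries the Borel set $I$ to a Borel set, namely $\left]0,1\right[\setminus X$. Therefore $X$ is Borel in $[0,1]$.

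The one subtlety to be careful about is the metrizability/second-countability needed to invoke "continuous bijection of compact Hausdorff spaces is a homeomorphism" together with the fact that the Borel $\sigma$-algebra of the subspace $[0,1]\times\{0\}\cap\DA(X)$ agrees with the trace of the Borel $\sigma$-algebra of $\DA(X)$; since $\DA(X)$ is a separable compact line it is metrizable (it embeds in $[0,1]\times\{0,1\}$ with its order topology, which is second countable), so all of this is routine. I expect the main obstacle — really the only place that needs thought — is making sure the map used in the converse direction is genuinely a Borel isomorphism onto $[0,1]$ and that $I$, viewed inside $\DA(X)$, really does get sent onto $\left]0,1\right[\setminus X$ and not some larger or smaller set; once the identification $\pi_1^{-1}(u)\cap\big([0,1]\times\{0\}\big) = \{(u,0)\}$ is spelled out, everything falls into place.
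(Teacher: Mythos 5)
Your forward direction is fine and is exactly the paper's argument, but the converse contains genuine gaps. First, $\DA(X)$ is \emph{not} metrizable when $X$ is uncountable: $\DA([0,1])$ is the double arrow space, separable and first countable but not second countable (the uncountably many jumps $(u,0)<(u,1)$ preclude a countable base), so the appeal to ``separable compact line, hence metrizable'' is false and the compact-metrizable machinery you lean on is unavailable. Second, $[0,1]\times\{0\}$ is \emph{not} closed in $\DA(X)$: for $u\in X$ with $u<1$, the points $(v,0)$ with $v\downarrow u$ converge to $(u,1)$, so $(u,1)$ lies in its closure; worse, its Borelness in $\DA(X)$ is itself equivalent to $X$ being Borel (pull it back under the increasing section $u\mapsto\max\pi_1^{-1}(u)$ using measurability of increasing maps), so asserting it is Borel at this stage of the converse is circular. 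Third, and decisively, $\pi_1$ restricted to $[0,1]\times\{0\}$ is a continuous bijection onto $[0,1]$ but \emph{not} a homeomorphism: its inverse $\lambda(u)=(u,0)$ is discontinuous from the right at every $u\in X$ with $u<1$, again because $(v,0)\to(u,1)$ as $v\downarrow u$. So the step ``the restriction carries the Borel set $I$ to a Borel set'' is precisely the unproved point, and neither compactness nor metrizability can rescue it.

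What does work --- and is the paper's actual route --- is to accept that $\lambda:[0,1]\ni u\mapsto(u,0)\in\DA(X)$ is discontinuous but increasing, and to invoke Lemma~\ref{thm:increasing}: an increasing map into a separable compact line is Borel measurable (proved via the linear density of continuous increasing functions and perfect normality, which identifies the Baire and Borel $\sigma$-algebras of $\DA(X)$). Given Borel regularity, $\left]0,1\right[\setminus X=\lambda^{-1}\bigl[\,\text{internal points of }\DA(X)\,\bigr]$ is then Borel, hence $X$ is Borel. Your closing remark correctly senses that the crux is showing the relevant map is a Borel isomorphism, but the justification must come from measurability of increasing maps (or an equivalent substitute), not from a compactness/metrizability argument, both of which fail for $\DA(X)$.
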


The proof of Proposition~\ref{thm:Borelregular} uses the lemma below. Recall that the {\em Baire $\sigma$-algebra\/}
of a topological space is the $\sigma$-algebra spanned by the zero sets of continuous real-valued functions
on that space.
\begin{lem}\label{thm:increasing}
A (not necessarily continuous) increasing map $\lambda:K\to L$ is measurable, if $K$ is endowed with its Borel $\sigma$-algebra and $L$ is endowed
with its Baire $\sigma$-algebra. In particular, if $L$ is separable, then $\lambda$ is measurable when both $K$ and $L$ are endowed with their Borel
$\sigma$-algebras.
\end{lem}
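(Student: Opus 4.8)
The plan is to prove that an increasing map $\lambda:K\to L$ is Borel-to-Baire measurable by showing that the preimage of every zero set of a continuous function on $L$ is Borel in $K$. Since the Baire $\sigma$-algebra is generated by such zero sets, it suffices to check measurability on this generating family. The key observation is that, by Lemma~\ref{thm:MKNBVK} (really by the underlying fact that continuous increasing functions are dense in $C(L)$, cf.\ \cite[Proposition~3.2]{Kubis}), every continuous $g\in C(L)$ is a uniform limit of linear combinations of continuous increasing functions; and a uniform limit of measurable functions is measurable, while the zero set of $g$ is a countable intersection $\bigcap_m g^{-1}\big(\left]-\tfrac1m,\tfrac1m\right[\big)$. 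Therefore the problem reduces to showing that $g\circ\lambda$ is Borel measurable whenever $g:L\to\R$ is continuous and increasing.

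So the heart of the matter is: if $\lambda:K\to L$ is increasing and $g:L\to\R$ is increasing, then $h=g\circ\lambda:K\to\R$ is increasing, and I must argue that an increasing real-valued function on a compact line is Borel measurable. For this, fix $c\in\R$ and consider the set $\{x\in K: h(x)<c\}$. Because $h$ is increasing, this set is an initial segment (down-set) of $K$: if $x$ belongs to it and $y<x$, then $h(y)\le h(x)<c$, so $y$ belongs to it as well. An initial segment $I$ of a compact line is automatically Borel: either $I$ has a supremum $s$ in $K$, in which case $I$ equals $[0,s]$ or $[0,s[\,$, both of which are Borel (one is closed, the other is the difference of a closed set and a point); or $I$ has no supremum, which for a compact line cannot happen unless $I=K$ or $I=\emptyset$. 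Hence $\{h<c\}$ is Borel for every $c$, so $h$ is Borel measurable. Combining, $g\circ\lambda$ is Borel measurable for every continuous increasing $g\in C(L)$, hence (by density and uniform limits) for every $g\in C(L)$, hence $\lambda^{-1}$ of every zero set is Borel, which is exactly Baire measurability of $\lambda$.

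For the last sentence of the lemma, I would invoke Remark~\ref{thm:likemetrizable}: when $L$ is separable it is perfectly normal, and in a perfectly normal space every closed set is a zero set, so the Baire $\sigma$-algebra coincides with the Borel $\sigma$-algebra. Thus Borel-to-Baire measurability of $\lambda$ is the same as Borel-to-Borel measurability, giving the ``in particular'' clause.

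The main obstacle, such as it is, is the bookkeeping for passing from continuous increasing functions to arbitrary continuous functions: one must be slightly careful that a linear combination of increasing functions need not be monotone, but that is harmless since finite sums and uniform limits of Borel functions are Borel. The genuinely load-bearing fact is the trivial-looking claim that every down-set of a compact line is Borel, which relies on the order-completeness of compact lines (every subset has a supremum) — I would state this as a small separate observation before assembling the argument.
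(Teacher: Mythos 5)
Your argument is correct and follows essentially the same route as the paper: reduce via the generating family of the Baire $\sigma$-algebra to showing $g\circ\lambda$ is Borel for $g\in C(L)$, use the linear density of continuous increasing functions (\cite[Proposition~3.2]{Kubis}) together with closure of Borel functions under linear combinations and uniform limits to assume $g$ increasing, observe that sublevel sets of the increasing function $g\circ\lambda$ are order-convex initial segments of $K$ and hence Borel, and conclude the separable case from perfect normality (Baire $=$ Borel). You merely spell out the ``without loss of generality'' step and the Borel-ness of initial segments in more detail than the paper does; the passing attribution to Lemma~\ref{thm:MKNBVK} is inessential since you immediately cite the correct density fact.
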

\begin{proof}
Since the Baire $\sigma$-algebra of $L$ is the smallest $\sigma$-algebra for which every element of $C(L)$ is measurable, it is sufficient
to show that $f\circ\lambda:K\to\R$ is Borel measurable, for every $f\in C(L)$. Moreover, by \cite[Proposition~3.2]{Kubis}, the set of continuous increasing functions is linearly dense in $C(L)$. Therefore, we assume without loss of generality that $f$ is increasing. Then, for every $c\in\R$,
the set $\big\{s\in K:(f\circ\lambda)(s)\le c\big\}$ is an interval of $K$ and hence a Borel set. Finally, note that if $L$ is separable
then $L$ is perfectly normal (Remark~\ref{thm:likemetrizable}) and hence its Borel and Baire $\sigma$-algebras coincide.
\end{proof}

\begin{proof}[Proof of Proposition~\ref{thm:Borelregular}]
If $X$ is a Borel
subset of $[0,1]$, then the set of internal points of $\DA(X)$ is a Borel set, being the inverse image under the continuous map $\pi_1:\DA(X)\to[0,1]$ of $\left]0,1\right[\setminus X$.
Conversely, assume that $\DA(X)$ is Borel regular. The set $\left]0,1\right[\setminus X$ is the inverse image
under the increasing map $\lambda:[0,1]\ni u\mapsto(u,0)\in\DA(X)$ of the set of internal points of $\DA(X)$. Hence, by Lemma~\ref{thm:increasing}, $X$ is a Borel subset of $[0,1]$.
\end{proof}

We will finish the section by proving that, for a separable Borel regular compact line $L$, the statement
\begin{equation}\label{eq:statement}
\text{$\phi$ has the \EP}\Longleftrightarrow\text{$\phi$ is of countable type}
\end{equation}
is independent of the axioms of ZFC. More specifically, in Proposition~\ref{thm:underCH} we will show that,
under the continuum hypothesis (CH), the equivalence \eqref{eq:statement} is false. Moreover, Proposition~\ref{thm:underMA} shows that, assuming Martin's Axiom and the negation of CH (more precisely,
assuming $\text{MA}(\omega_1)$), the equivalence \eqref{eq:statement} holds.

\begin{prop}\label{thm:underCH}
Assume CH. There exist separable compact lines $K$ and $L$, with $L$ Borel regular, and a continuous increasing surjection $\phi:K\to L$
that has the \EP, but is not of countable type.
\end{prop}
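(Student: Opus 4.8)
The plan is to realise $L$ and $K$ in the form $\DA(Y)$ and $\DA(X)$. I would look for a Borel set $Y\subseteq[0,1]$ and a set $X$ with $Y\subseteq X\subseteq[0,1]$ and $X\setminus Y$ uncountable, and then take $L=\DA(Y)$, $K=\DA(X)$ and $\phi:K\to L$ the continuous increasing surjection associated with the inclusion $Y\subseteq X$ described just before Example~\ref{thm:exaBernstein}. Since $Y$ is Borel, Proposition~\ref{thm:Borelregular} gives that $L$ is Borel regular, and $L$, $K$ are separable compact lines. Choosing $X\setminus Y$ uncountable makes $Q_0(\phi)=\big((X\setminus Y)\times\{0\}\big)\setminus\{(0,0),(1,0)\}$ uncountable, so $\phi$ is not of countable type; the whole content is therefore to arrange that $\phi$ has the \EP.

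First I would reformulate the \EP\ for $\phi$ via Theorem~\ref{thm:mainseparavel}, which applies since $L$ is separable. Let $C$ be a $G_\delta$ subset of $L=\DA(Y)$ all of whose points are internal; then $C$ is contained in the set of internal points $\big(\left]0,1\right[\setminus Y\big)\times\{0\}$ of $\DA(Y)$, so, arguing exactly as in Example~\ref{thm:exaBernstein}, $C=\pi_1^{-1}[B]$ where $B=\pi_1[C]$ is a $G_\delta$ subset of $[0,1]$ disjoint from $Y$. Since then $C=B\times\{0\}$ and $Q(\phi)=(X\setminus Y)\times\{0\}$, we get $C\cap Q(\phi)=(B\cap X)\times\{0\}$; hence, by Theorem~\ref{thm:mainseparavel}, $\phi$ will have the \EP\ as soon as $B\cap X$ is countable for every $G_\delta$ subset $B$ of $[0,1]$ disjoint from $Y$. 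Writing $S=X\setminus Y$, it therefore suffices to find a Borel set $Y\subseteq[0,1]$ and an uncountable set $S\subseteq[0,1]\setminus Y$ such that $B\cap S$ is countable for every $G_\delta$ subset $B$ of $[0,1]$ disjoint from $Y$; one then takes $X=Y\cup S$.

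Now CH enters. Put $M=[0,1]\setminus Y$. A countable union of $G_\delta$ subsets of $M$ is a $G_{\delta\sigma}$ subset of $M$, so if $M$ were $G_{\delta\sigma}$ the set $S$ above would be forced to be countable; accordingly I would choose $Y$ so that $M=[0,1]\setminus Y$ is \emph{not} $G_{\delta\sigma}$. Such a $Y$ exists because the Borel hierarchy of an uncountable Polish space is strictly increasing (a classical fact; see \cite{Kechris}), and then $M$ is automatically uncountable, since a countable set is $F_\sigma$, hence $G_{\delta\sigma}$. Under CH there are only $\aleph_1$ many $G_\delta$ subsets of $[0,1]$, so I would enumerate those contained in $M$ as $\{B_\alpha:\alpha<\omega_1\}$ and recursively choose points $s_\alpha\in M$ with $s_\alpha\notin\bigcup_{\gamma<\alpha}B_\gamma$ and $s_\alpha\neq s_\gamma$ for all $\gamma<\alpha$; this is possible because $\bigcup_{\gamma<\alpha}B_\gamma\cup\{s_\gamma:\gamma<\alpha\}$ is a $G_{\delta\sigma}$ subset of $M$ and hence, $M$ not being $G_{\delta\sigma}$, a proper subset of $M$. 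Setting $S=\{s_\alpha:\alpha<\omega_1\}$, the set $S$ is uncountable, and for each $\beta<\omega_1$ we have $S\cap B_\beta\subseteq\{s_\alpha:\alpha\leq\beta\}$, a countable set; since every $G_\delta$ subset of $[0,1]$ disjoint from $Y$ occurs among the $B_\alpha$, the pair $(Y,S)$ has the required property, and with $X=Y\cup S$ the construction is finished.

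I expect the reformulation in the second paragraph to be the crux: one must realise that requiring $L$ to be Borel regular is precisely what turns the \EP\ (through Theorem~\ref{thm:mainseparavel}) into the demand that an uncountable set $S\subseteq M=[0,1]\setminus Y$ meet every $G_\delta$ subset of $M$ in a countable set, and then that such an $S$ can exist exactly when $M$ fails to be $G_{\delta\sigma}$. Once this is seen, the transfinite recursion is routine, but it genuinely consumes CH — through the enumeration, in order type $\omega_1$, of all $G_\delta$ subsets of $M$ — which is what makes room for \eqref{eq:statement} to fail consistently, the complementary half being Proposition~\ref{thm:underMA}. The only other points to check, all routine, are that $\DA(Y)$ is Borel regular, that $\pi_1$ carries $G_\delta$ subsets of $\DA(Y)$ to $G_\delta$ subsets of $[0,1]$ (because $\pi_1$ is closed), and that $\DA(Y)$ and $\DA(X)$ are separable compact lines with $Q(\phi)=(X\setminus Y)\times\{0\}$.
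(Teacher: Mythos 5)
Your proposal is correct and follows essentially the same route as the paper: you take $L=\DA(Y)$ with $Y$ Borel but with non-$G_{\delta\sigma}$ complement (the paper's $X$, chosen Borel but not $F_{\sigma\delta}$), use CH to enumerate the $G_\delta$ subsets of $[0,1]$ missing $Y$ and recursively pick an uncountable set $S$ meeting each of them in a countable set, and then apply Theorem~\ref{thm:mainseparavel} together with the Example~\ref{thm:exaBernstein}-style identification $C=B\times\{0\}$. The only differences are notational (your $Y$, $X=Y\cup S$ versus the paper's $X$, $X\cup S$) and the slightly more explicit statement of the reformulation step.
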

\begin{proof}
Let $X$ be a Borel subset of $[0,1]$ that is not an $F_{\sigma\delta}$ set (see \cite[Theorem~22.4]{Kechris}).
Under CH, the collection of all $G_\delta$ subsets of $[0,1]$ contained in $\left]0,1\right[\setminus X$
can be written as $\big\{B_\alpha:\alpha<\omega_1\big\}$. Define, by recursion, a family $(u_\alpha)_{\alpha<\omega_1}$ of points of $\left]0,1\right[\setminus X$ such that, for all $\alpha<\omega_1$, we have that $u_\alpha\not\in\bigcup_{\beta<\alpha}\big(B_\beta\cup\{u_\beta\}\big)$. This is possible, because
$\left]0,1\right[\setminus X$ is not a $G_{\delta\sigma}$ set. Define $S=\big\{u_\alpha:\alpha<\omega_1\big\}$,
$K=\DA(X\cup S)$, $L=\DA(X)$, and let $\phi:K\to L$ be the continuous increasing surjection corresponding
to the inclusion $X\subset X\cup S$.
By Proposition~\ref{thm:Borelregular}, the separable compact line $L$ is Borel regular. Moreover, $Q_0(\phi)=S\times\{0\}$, so that $\phi$ is not of countable type. Using Theorem~\ref{thm:mainseparavel}, we show that $\phi$ has the \EP. Let $C$ be a $G_\delta$ subset of $\DA(X)$ consisting of internal points of $\DA(X)$.
As argued in Example~\ref{thm:exaBernstein}, we have $C=B\times\{0\}$, with $B$ a $G_\delta$
subset of $[0,1]$ contained in $\left]0,1\right[\setminus X$. Then $B=B_\alpha$, for some $\alpha<\omega_1$.
To conclude the proof, note that $C\cap Q_0(\phi)=(B\cap S)\times\{0\}$ and that $B\cap S\subset\big\{u_\beta:\beta\le\alpha\big\}$.
\end{proof}

The next lemma is used in the proof of Proposition~\ref{thm:underMA}.
\begin{lem}\label{thm:MAkappa}
Assume $\text{MA}(\kappa)$. Every subset $M$ of $\omega^\omega$ with $\vert M\vert\le\kappa$ is contained in a $\sigma$-compact subset of $\omega^\omega$.
\end{lem}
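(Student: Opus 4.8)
The plan is to combine a standard topological description of the $\sigma$-compact subsets of the Baire space $\omega^{\omega}$ with the classical consequence of $\text{MA}(\kappa)$ that a family of at most $\kappa$ functions in $\omega^{\omega}$ is bounded in the eventual-domination preorder $\le^{*}$, where $f\le^{*}g$ means $f(n)\le g(n)$ for all but finitely many $n$. Here $\omega^{\omega}$ carries the product topology with each factor $\omega$ discrete.

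\emph{Step 1.} I would first show that a subset $M$ of $\omega^{\omega}$ is contained in a $\sigma$-compact set if and only if there is $g\in\omega^{\omega}$ with $f\le^{*}g$ for all $f\in M$. The point is that the compact subsets of $\omega^{\omega}$ are precisely the closed subsets of the products $\prod_{n}\{0,1,\dots,g(n)\}$, $g\in\omega^{\omega}$: such a product is compact by Tychonoff's theorem, and conversely if $K\subseteq\omega^{\omega}$ is compact then the image of $K$ under the $n$-th coordinate projection is a compact, hence finite, subset of $\omega$, so $K$ is contained in $\prod_{n}\{0,\dots,g(n)\}$ with $g(n)$ the maximum of that image. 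For the nontrivial implication, if $M\subseteq\bigcup_{k\ge1}K_{k}$ with each $K_{k}$ compact, choose $g_{k}$ with $h(n)\le g_{k}(n)$ for all $h\in K_{k}$ and all $n$, and set $g(n)=\max_{k\le n}g_{k}(n)$; then every $f\in M$ belongs to some $K_{k}$ and hence satisfies $f(n)\le g(n)$ for all $n\ge k$. For the reverse implication, if $f\le^{*}g$ for all $f\in M$, then $M$ is contained in $\bigcup_{k,m\ge1}\{h\in\omega^{\omega}:h(n)\le m\text{ for }n<k,\ h(n)\le g(n)\text{ for }n\ge k\}$, a countable union of compact sets.

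\emph{Step 2.} By Step~1 it remains to produce, from $\text{MA}(\kappa)$, a function $g\in\omega^{\omega}$ with $f\le^{*}g$ for every $f\in M$, and for this I would use Hechler's forcing notion $P$: a condition is a pair $p=(s_{p},h_{p})$ with $s_{p}\in\omega^{<\omega}$ and $h_{p}\in\omega^{\omega}$, and $p\le q$ means $s_{p}\supseteq s_{q}$, $h_{p}(n)\ge h_{q}(n)$ for all $n$, and $s_{p}(i)\ge h_{q}(i)$ for every $i\in\mathrm{dom}(s_{p})\setminus\mathrm{dom}(s_{q})$. Two conditions with the same stem have a common extension (take the pointwise maximum of their second coordinates), so $P$ is $\sigma$-centered, in particular ccc. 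For $k\ge1$ the set $E_{k}=\{p\in P:|s_{p}|\ge k\}$ is dense, since one extends the stem of a condition $q$ by following $h_{q}$; and for $f\in M$ the set $D_{f}=\{p\in P:h_{p}(n)\ge f(n)\text{ for all }n\}$ is dense, since one may replace $h_{q}$ by $\max(h_{q},f)$. We may assume $\kappa$ is infinite (the case of finite $\kappa$, and indeed of countable $M$, being trivial), so $\{E_{k}:k\ge1\}\cup\{D_{f}:f\in M\}$ is a family of at most $\kappa$ dense subsets of $P$ and $\text{MA}(\kappa)$ provides a filter $G$ meeting all of them. Then $g=\bigcup_{p\in G}s_{p}$ is a well-defined element of $\omega^{\omega}$ — it is a function because $G$ is a filter and it is total because $G$ meets every $E_{k}$ — and for each $f\in M$, fixing $p_{0}\in G\cap D_{f}$, I claim $g(i)\ge f(i)$ whenever $i\ge|s_{p_{0}}|$: given such an $i$, choose $q\in G$ with $i\in\mathrm{dom}(s_{q})$ and then $r\in G$ with $r\le p_{0}$ and $r\le q$; since $i\in\mathrm{dom}(s_{r})\setminus\mathrm{dom}(s_{p_{0}})$, we get $g(i)=s_{r}(i)\ge h_{p_{0}}(i)\ge f(i)$. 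Thus $f\le^{*}g$ for every $f\in M$, and Step~1 finishes the proof.

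The conceptual content here is completely standard; I expect the only real work to be the two density verifications in Step~2 and, above all, the last implication — that meeting $D_{f}$ forces the generic function to dominate $f$ past the stem of the witnessing condition — which requires carefully unwinding the Hechler ordering. The application of $\text{MA}(\kappa)$ and the compactness bookkeeping in Step~1 are entirely routine.
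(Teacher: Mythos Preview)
Your proof is correct and follows essentially the same approach as the paper: both reduce the statement to the equivalence ``$M$ is contained in a $\sigma$-compact set $\Leftrightarrow$ $M$ is $\le^{*}$-bounded'' and then invoke $\text{MA}(\kappa)$ to obtain $\le^{*}$-boundedness. The only difference is that the paper simply cites Kunen for the fact that $\text{MA}(\kappa)$ implies every $\kappa$-sized subset of $\omega^{\omega}$ is $\le^{*}$-bounded, whereas you spell out the Hechler-forcing argument in full; this is added detail, not a different route.
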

\begin{proof}
Define a partial order $\le$ on $\omega^\omega$ by stating that $(x_n)_{n\in\omega}\le(y_n)_{n\in\omega}$
if and only if $x_n\le y_n$, for all $n\in\omega$. Clearly, the relatively compact subsets of $\omega^\omega$
coincide with the $\le$-bounded subsets of $\omega^\omega$. Now, define a preorder $\le^*$ on $\omega^\omega$
by stating that $(x_n)_{n\in\omega}\le^*(y_n)_{n\in\omega}$ if and only if $x_n\le y_n$, for all but finitely
many $n\in\omega$. It is easily checked that $M$ is contained in a $\sigma$-compact subset of $\omega^\omega$
if and only if $M$ is $\le^*$-bounded. The conclusion follows from the fact that, under $\text{MA}(\kappa)$,
every subset $M$ of $\omega^\omega$ with $\vert M\vert\le\kappa$ is $\le^*$-bounded (see \cite[(8) pg.\ 87]{Kunen}).
\end{proof}

\begin{prop}\label{thm:underMA}
Assume $\text{MA}(\omega_1)$. Let $K$ and $L$ be compact lines and $\phi:K\to L$ be a continuous increasing surjection. Assume that $L$ is separable and
Borel regular. If $\phi$ has the \EP, then $\phi$ is of countable type.
\end{prop}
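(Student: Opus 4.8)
The plan is to argue by contradiction. Assume $\phi$ has the \EP\ but is not of countable type, so that $Q_0(\phi)$ is uncountable; in particular $L$ is uncountable. By Theorem~\ref{thm:mainseparavel} it will be enough to exhibit a $G_\delta$ subset $C$ of $L$ all of whose points are internal and with $C\cap Q(\phi)$ uncountable. Since separable compact lines are perfectly normal (Remark~\ref{thm:likemetrizable}), every closed subset of $L$ is a $G_\delta$ set, so it actually suffices to produce such a $C$ that is closed. Throughout, $I$ denotes the set of internal points of $L$, which is a Borel subset of $L$ by the Borel regularity hypothesis.

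First I would fix, via Lemma~\ref{thm:lemapsi}, a continuous increasing surjection $\psi:L\to[0,1]$ with every fibre $\psi^{-1}(u)=[a_u,b_u]$ countable. Set $E=\{u\in[0,1]:\vert\psi^{-1}(u)\vert>2\}$, $G_0=\left]0,1\right[\setminus Q(\psi)$ and $P=\psi^{-1}[G_0]$. Since $L$ is separable, hence ccc, the nonempty pairwise disjoint intervals $\left]a_u,b_u\right[$ with $u\in E$ are countably many, so $E$ and $\psi^{-1}[E]$ are countable. I would then check two elementary facts. (i) Every $t\in P$ is internal: since $\psi(t)\in G_0$ we have $\psi^{-1}(\psi(t))=\{t\}$ and $0<\psi(t)<1$, so $t\ne 0,\max L$, and $t$ can have neither an immediate successor nor an immediate predecessor, for otherwise $[0,1]$ would have one (using that $\psi$ is increasing and surjective and $[0,1]$ is densely ordered). (ii) If $t\in I$ and $\psi(t)\notin E$ then $t\in P$: indeed $\vert\psi^{-1}(\psi(t))\vert\le 2$, and any point lying in a two-point fibre is isolated on one side, hence external; so $\psi^{-1}(\psi(t))$ is a singleton, and $\psi(t)\ne 0,1$ because $t\ne 0,\max L$; thus $\psi(t)\in G_0$. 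From (i), $P\subseteq I$; from (ii), $I\setminus P\subseteq\psi^{-1}[E]$ is countable, so $P=I\setminus(I\setminus P)$ is a Borel subset of $L$. Moreover $\psi$ is injective on $P$ (singleton fibres over $G_0$), and applying (ii) to points of $Q_0(\phi)$ shows that $A_0:=Q_0(\phi)\setminus\psi^{-1}[E]$ is contained in $P$; note that $A_0$ is uncountable.

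The crux --- and the step I expect to be the real obstacle --- is to show that $G_0=\psi[P]$ is a Borel subset of $[0,1]$. The difficulty is that $L$ is not metrizable, so the Lusin--Suslin theorem does not apply to $\psi$ directly. I would get around this using perfect normality of $L$, which makes its Borel and Baire $\sigma$-algebras coincide (cf.\ the proof of Lemma~\ref{thm:increasing}): the Borel set $P$ lies in the $\sigma$-algebra generated by the zero sets of countably many $f_1,f_2,\dots\in C(L)$, so for the continuous map $g=(\psi,f_1,f_2,\dots):L\to\R^\omega$ one has $P=g^{-1}[B]$ for some Borel $B\subseteq\R^\omega$. Hence $g[P]=B\cap g[L]$ is Borel in $\R^\omega$, the first coordinate projection $\R^\omega\to\R$ is injective on $g[P]$ (because $\psi$ is injective on $P$) and carries $g[P]$ onto $G_0$, so $G_0$ is Borel in $[0,1]$ by the Lusin--Suslin theorem \cite{Kechris}.

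Finally, I would invoke $\text{MA}(\omega_1)$. Choose $\widetilde A\subseteq A_0$ with $\vert\widetilde A\vert=\omega_1$ and put $A'=\psi[\widetilde A]\subseteq G_0$, so that $\vert A'\vert=\omega_1$ by injectivity of $\psi$ on $P$. Since $G_0$ is a Borel subset of the Polish space $[0,1]$, there exist a closed set $F\subseteq\omega^\omega$ and a continuous bijection $f_0:F\to G_0$ \cite{Kechris}. The set $f_0^{-1}[A']\subseteq\omega^\omega$ has cardinality $\omega_1$, so by Lemma~\ref{thm:MAkappa} it is contained in a $\sigma$-compact set $\Sigma\subseteq\omega^\omega$; then $f_0[\Sigma\cap F]$ is a $\sigma$-compact subset of $G_0$ containing $A'$. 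Writing $f_0[\Sigma\cap F]=\bigcup_{k\ge1}K_k$ with each $K_k$ compact, some $K_k$ must meet $A'$ in an uncountable set, since a countable union of countable sets is countable. For that $k$, the set $C:=\psi^{-1}[K_k]$ is closed in $L$ --- hence a $G_\delta$ set --- and $C\subseteq\psi^{-1}[G_0]=P\subseteq I$, so all its points are internal; and since $\psi$ is injective on $P\supseteq\widetilde A$, the set $C\cap Q(\phi)\supseteq C\cap\widetilde A$ is carried bijectively by $\psi$ onto $K_k\cap A'$ and hence is uncountable. This contradicts Theorem~\ref{thm:mainseparavel}, so $Q_0(\phi)$ must be countable, i.e.\ $\phi$ is of countable type.
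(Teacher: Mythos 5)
Your proof is correct, and at the level of overall strategy it coincides with the paper's: argue by contradiction through Theorem~\ref{thm:mainseparavel}, use $\psi$ from Lemma~\ref{thm:lemapsi} to identify $Q_0(\phi)$, up to a countable set, with a set of singleton-fibre (hence internal) points lying over $\left]0,1\right[\setminus Q(\psi)$, prove that $\left]0,1\right[\setminus Q(\psi)$ is Borel in $[0,1]$, and then combine Lemma~\ref{thm:MAkappa} with a parametrization of this Borel set by $\omega^\omega$ to extract a compact subset of it meeting the relevant $\omega_1$-sized set uncountably, the contradicting $G_\delta$ set $C$ being its $\psi$-preimage. The one genuine divergence is the step you single out as the crux: the paper obtains the Borelness of $\left]0,1\right[\setminus Q(\psi)$ in one line by applying Lemma~\ref{thm:increasing} to an increasing right inverse $\lambda:[0,1]\to L$ of $\psi$, since $\left]0,1\right[\setminus Q(\psi)=\lambda^{-1}\big[\psi^{-1}\big[\left]0,1\right[\setminus Q(\psi)\big]\big]$ and this $\psi$-preimage is Borel by Borel regularity; your route through the Baire $\sigma$-algebra, the embedding $g=(\psi,f_1,f_2,\dots):L\to\R^\omega$, and Lusin--Suslin is valid but substantially heavier, and Lemma~\ref{thm:increasing} was set up in the paper precisely to make this step immediate. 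A second, cosmetic difference: you use a continuous bijection $f_0:F\to G_0$ from a closed $F\subseteq\omega^\omega$, so that the preimage of your uncountable set automatically has size $\omega_1$, whereas the paper uses a continuous surjection $\theta:\omega^\omega\to\left]0,1\right[\setminus Q(\psi)$ and selects an $\omega_1$-sized $M$ on which $\theta$ is injective; both variants rest on \cite[Theorem~13.7]{Kechris} and Lemma~\ref{thm:MAkappa} in the same way.
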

\begin{proof}
The conclusion is trivial if $L$ is countable. If $L$ is uncountable, let $\psi:L\to[0,1]$ be as in the statement
of Lemma~\ref{thm:lemapsi}. Arguing as in the beginning of the proof of Lemma~\ref{thm:Gdeltaweakstar},
one obtains that the set of internal points of $L$ is the union of $\psi^{-1}\big[\left]0,1\right[\setminus Q(\psi)\big]$ with a countable set. Then $Q_0(\phi)$ is the union of $\psi^{-1}[S]$ with a countable set,
where $S$ is a subset of $\left]0,1\right[\setminus Q(\psi)$.

Now assume by contradiction that $Q_0(\phi)$ is uncountable, so that $S$ is uncountable. We will prove that $\phi$ does not have the \EP\ by exhibiting a $G_\delta$ subset $C$ of $L$, consisting of internal points of $L$, such that $C\cap Q_0(\phi)$ is uncountable. First, we claim that $\left]0,1\right[\setminus Q(\psi)$ is a Borel set. Namely,
the Borel regularity of $L$ implies that
$\psi^{-1}\big[\left]0,1\right[\setminus Q(\psi)\big]$ is a Borel subset of $L$. The claim then follows
from Lemma~\ref{thm:increasing}, noting that $\left]0,1\right[\setminus Q(\psi)$ is the inverse image of $\psi^{-1}\big[\left]0,1\right[\setminus Q(\psi)\big]$ under an increasing right inverse $\lambda:[0,1]\to L$
of $\psi$.

The Borel set $\left]0,1\right[\setminus Q(\psi)$ is the image
of a continuous map $\theta:\omega^\omega\to[0,1]$ (see \cite[Theorem~13.7]{Kechris}). Since $S$ is an uncountable
subset of the image of $\theta$,
there exists a subset $M$ of $\omega^\omega$ such that $\vert M\vert=\omega_1$, $\theta\vert_M$ is injective
and $\theta[M]\subset S$. By Lemma~\ref{thm:MAkappa}, $M$ is contained in a $\sigma$-compact subset of $\omega^\omega$
and therefore there exists a compact subset $\mathcal K$ of $\omega^\omega$ with $\vert M\cap\mathcal K\vert=\omega_1$.
Then $\theta[\mathcal K]$ is a compact subset of $\left]0,1\right[\setminus Q(\psi)$ and $S\cap\theta[\mathcal K]$ is uncountable. The conclusion is now obtained by taking $C=\psi^{-1}\big[\theta[\mathcal K]\big]$.
\end{proof}

\end{section}


\begin{thebibliography}{99}


\bibitem{ArgyrosLondon} S. A. Argyros, J. F. Castillo, A. S. Granero, M. Jim\'enez \& J. P. Moreno,
Complementation and embeddings of $c_0(I)$ in Banach spaces, {\em Proc.\ London.\ Math.\ Soc.}, 85 (3), 2002, pgs.\ 742---768.



\bibitem{CTJFA} C. Correa \& D. V. Tausk, Compact lines and the Sobczyk property, {\em J. Funct.\ Anal.}, 2014, http://dx.doi.org/10.1016/j.jfa.2014.02.007.

\bibitem{CT} C. Correa \& D. V. Tausk, On extensions of $c_0$-valued operators, {\em J. Math.\ Anal.\ Appl.\/} 405, 2013, pgs.\ 400---408.




%

\bibitem{KK} O. Kalenda \& W. Kubi\'s, Complementation in spaces of continuous functions on compact lines, {\em J. Math.\ Anal.\ Appl.\/} 386, 2012, pgs.\ 241---257.

\bibitem{KK2} O. Kalenda \& W. Kubi\'s, The structure of Valdivia compact lines, {\em Topology Appl.\/} 157, 2010, pgs.\ 1142---1151.

\bibitem{Kechris} A. S. Kechris, {\em Classical descriptive set theory}, Graduate Texts in Mathematics 156, Springer--Verlag, 1995.

%

\bibitem{Kubis} W. Kubi\'s, Linearly ordered compacta and Banach spaces with a projectional resolution of the identity,
{\em Topology Appl.\/} 154 (3), 2007, pgs.\ 749---757.

\bibitem{Kunen} K. Kunen, {\em Set Theory: An Introduction to Independence Proofs}, Studies in Logic and the Found.\ of Math.\ 102,
North-Holland Publishing Company, Amsterdam, 1980.

%

\bibitem{PD} D. A. Martin \& J. R. Steel, A proof of projective determinacy, {\em J. Amer.\ Math.\ Soc.\/}
2 (1), 1989, pgs.\ 71---125.

\bibitem{Molto} A. Molt\'o, On a theorem of Sobczyk, {\em Bull.\ Aust.\ Math.\ Soc.\/} 43 (1), 1991, pgs.\ 123---130.

\bibitem{Patterson} W. M. Patterson, Complemented $c_0$-subspaces of a non-separable $C(K)$-space,
{\em Canad.\ Math.\ Bull.\/} 36 (3), 1993, pgs.\ 351---357.

%


\bibitem{c0sum} F. C. S\'anchez, Yet another proof of Sobczyk's Theorem, Methods in Banach Space Theory,
Edited by J. M. F. Castillo \& W. B. Johnson, London Mathematical Society Lecture Notes Series no.\ 337, Cambridge, 2006.

\bibitem{JesusSobczyk} F. C. S\'anchez, J. M. F. Castillo \& D. Yost, Sobczyk's Theorem from A to B, {\em Extracta Math.}, 15 (2), 2000, pgs.\ 391---420.

\bibitem{Sobczyk} A. Sobczyk, Projection of the space $(m)$ on its subspace $(c_0)$, {\em Bull.\ Amer.\
Math.\ Soc.\/} 47, 1941, pgs.\ 938---947.

%
%





\end{thebibliography}
\end{document}